\newtheorem{theorem}{Theorem}[section]
\newtheorem{lemma}[theorem]{Lemma}
\newtheorem{proposition}[theorem]{Proposition}
\theoremstyle{definition}
\newtheorem{definition}[theorem]{Definition}
\theoremstyle{remark}
\newtheorem{remark}[theorem]{Remark}
\newtheorem*{remark*}{Remark}
\numberwithin{equation}{section}
\numberwithin{figure}{section}
\begin{document}

\title{On traces of Fourier integral operators
localized at a finite set of points
\thanks{The work is supported by RFBR grant NN $16$-$01$-$00373$ A.}
}
\author{\MakeUppercase{P. A. Sipailo}\thanks{RUDN University, Russia}}

\maketitle

\begin{abstract}\noindent
Given a smooth embedding of manifolds $i: X \hookrightarrow M$ and a Fourier integral operator $\Phi$ acting on $M$,
obtained by quantization of a canonical transformation,
consider its trace $i^!(\Phi)$ on $X$ (in the sense of relative theory).
We discuss the situation when $i^!(\Phi)$
has the form of a Fourier--Mellin operator and, in particular, is localized at a finite set of points.
\end{abstract}

\tableofcontents

\section*{Introduction}

In the present paper we deal with traces of quantized canonical transformations
(Fourier integral operators associated with graphs of canonical transformations).
The trace of an operator is a central concept of relative elliptic theory, that is,
a theory associated with a pair manifold--submanifold (see, e.g.,~\cite{NaSt10}).

Given a smooth embedding $i: X \hookrightarrow M$
of manifolds and an operator $A$ acting on $M$, one defines the \textit{trace of A}
by the formula
$$
    i^!(A) = i^* A \,  i_*,
$$
where $i^*$ and $i_*$ stand for the boundary and coboundary operators
induced by $i$. Here the boundary operator is just the restriction operator from
$M$ to $X$, and the coboundary operator acts in the opposite direction in a dual manner.
Thus, $i^!$ is an operation that takes an operator on the ambient manifold $M$ to some operator on
its submanifold $X$.

This operation has one remarkable property. Namely, given an operator $A$
coming from some well-known class of operators, its trace $i^!(A)$ may happen
to be an operator of a very different nature. For example, the trace of a differential operator
has the structure of a pseudodifferential operator.

The things become more interesting when one considers operators on $M$
which are not (pseudo)differential. Here we refer to the papers~\cite{SaSt28, SaSt31, SaSt32, Losch1}
which deal with constructions equivalent to traces of shift operators.
It was shown there that the trace of a shift operator can produce an operator
which is smoothing outside some subset of a manifold.
We say that such an operator is \textit{localized} at this subset.

More specifically, special operators
were introduced, which are localized at isolated points in $X$
(we call them the Fourier--Mellin operators due to their structure).
They naturally occur from transversal
intersections or from group actions with isolated fixed points
(these situations are studied in the above-mentioned papers).
In the present paper we are interested in the situation when Fourier--Mellin operators
arise as traces of quantized canonical transformations.
We hope it would be a first step to an uncharted area
of general theory describing the traces of general Fourier integral operators.

The crucial role of the coboundary operator (and the coboundary operator itself)
was discovered by B.~Yu.~Sternin in~\cite{Ster1, Ster8}.
The trace as a standalone notion was introduced in~\cite{NoSt1, NoSt2}.

The detailed exposition of Fourier--Mellin operators one can find in
~\cite{Losch1, Losch2}. We note that a similar construction originally comes from~\cite{Ster8}.

The author is grateful to B.~Yu.~Sternin and A.~Yu.~Savin for their vital help and support.

\section{Preliminaries}

\subsection{Trace of an operator associated with a submanifold}
Let $M$ be a closed smooth manifold and $X$ its submanifold
of codimension $\nu$. Let $i: X \hookrightarrow M$ be the corresponding smooth embedding.
Given an operator $D$ on $M$, we associate with it a new operator $i^!(D)$
on $X$, which we call the \textit{trace of $D$},
defined as composition
of $D$ and boundary and coboundary operators induced by embedding $i$.
Let us give an explicit construction (see also~\cite{NoSt1, NoSt2, NaSt10}).

The \textit{boundary operator} $i^*$
is the restriction operator,
which associates with a function defined on the ambient manifold $M$ its restriction to submanifold $X$,
\begin{equation}\label{defeq:defBOp}
    i^*: u \longmapsto u\rvert_X.
\end{equation}
The boundary operator is continuous in the spaces
$$
    i^*: H^{s}(M) \longrightarrow H^{s-\frac{\nu}{2}}(X), \quad s - \nu/2 > 0.
$$

The \textit{coboundary operator} $i_*$ is adjoint to the boundary operator $i^*$.
It acts by the formula
\begin{equation}\label{defeq:defCobOp}
    i_*: u \longmapsto u \otimes \delta_X,
\end{equation}
where $\delta_X$ stands for the Dirac delta-function on $M$ concentrated on submanifold $X$.
The coboundary operator is continuous in the spaces
$$
    i_*: H^{-s + \frac{\nu}{2}}(X) \longrightarrow H^{-s}(M), \quad s - \nu/2 > 0.
$$

The \textit{trace} $i^!(D)$ of an operator $D$ of order $\operatorname{ord} D = m$
acting manifold $M$ is defined by the formula
\begin{equation}\label{defeq:sled}
    i^!(D) = i^*  D i_*: H^s(X) \longrightarrow H^{s-m-\nu}(X).
\end{equation}
The trace $i^!(D)$ is continuous in the specified spaces when $s < 0$, $s-m-\nu > 0$.
In particular, its order $m$ must be $m < -\nu$ (thus, $m$ is necessarily negative).

The \textit{reduced trace} $i^!_0(D)$ is obtained from $i^!(D)$ by reducing it to an operator acting in $L^2$-spaces.
We define it by the formula
$$
    i^!_0(D) = \Delta_{X}^{\frac{s-m-\nu}{2}} \, i^!(D) \, \Delta_{X}^{-\frac{s}{2}}: L^2(X) \longrightarrow L^2(X),
$$
where $\Delta_X$ stands for the Laplace operator on $X$.

\subsection{Fourier--Mellin operators}\label{section:operatorFM}
The most important property of Fourier--Mellin operators (see~\cite{Losch1})
is the property of being localized at a fixed point.
Here we say that an operator $A$ is \textit{localized in a set $S$} if for any cut-off function
$\varphi$ vanishing on $S$ compositions $A \varphi$ and $\varphi A$ are compact operators.

Let $x_0$ be a fixed point in manifold $X$. Fix its small neighbourhood, and let $x$ be coordinate functions
in this neighbourhood, such that $x_0$ is represented by the equation $x = 0$.
An operator $A$ on manifold $X$ is called the \textit{Fourier--Mellin operator} if
it is localized at $x_0$, and in $x$-coordinates has the following form
\begin{equation}\label{defeq:FM}
    A = \varphi(x) \, \mathcal{F}^{-1}_{p \rightarrow x}
        \, \psi(p) \,
        \mathcal{M}^{-1}_{\zeta \rightarrow r_p}
            \, K_\gamma(\zeta) \,
        \mathcal{M}_{r_p \rightarrow \zeta}
        \, \psi(p) \,
        \mathcal{F}_{x \rightarrow p} \, \varphi(x).
\end{equation}

Let us explain this formula. Starting from manifold $X$, one makes localization
in a coordinate neighbourhood of the fixed point with the help of multiplication
by a cut-off function $\varphi(x)$ such that
$\varphi(0) = 1$ in a neighbourhood of zero and $\varphi(x) = 0$ at infinity.
Then the Fourier transform $\mathcal{F}_{x \rightarrow p}$ takes us to the dual space,
where $p$ stands for Fourier coordinates. Then one cuts out the origin with the help of
a cut-off function $\psi(p)$ such that $\psi(p) = 0$ in a neighbourhood of zero and $\psi(p) = 1$ at infinity.
Then one performs the Mellin transform $\mathcal{M}_{r_p \rightarrow \zeta}$
with respect to the radial variable $r_p$, where we use spherical coordinates
$$
    p \longmapsto r_p \omega_p, \quad r_p \in \mathbb{R}_+, \, \omega_p \in \mathbb{S}^{n-1}.
$$
Next, $K_\gamma(\zeta)$ is an operator-valued function (of complex variable),
whose values are integral operators on the sphere of dimension $n-1$, satisfying the following conditions:
\begin{enumerate}
    \item $K_\gamma(\zeta)$ is analytic on the vertical line (further on we call it the \textit{weight line})
    \begin{equation}\label{defeq:weightLine}
        \Gamma_{\gamma} = \{\zeta \in \mathbb{C} \; | \; \operatorname{Re}(\zeta) = \gamma\}, \quad \gamma \in \mathbb{R}.
    \end{equation}
    \item Given $\zeta \in \Gamma_\gamma$, the value $K_\gamma(\zeta)$ is a compact operator.
    \item ${\left\| K_\gamma(\zeta) \right\|} \to 0$ as ${\lvert \zeta \rvert} \to \infty$ whenever $\zeta \in \Gamma_\gamma$.
\end{enumerate}
After multiplying by $K_\gamma(\zeta)$ in~\eqref{defeq:FM} one applies inverse transformations and returns to the original space.
Thus,~\eqref{defeq:FM} gives a well-defined operator on $X$.

It was shown in~\cite{Losch1} that
 that formula~\eqref{defeq:FM} determines a unique operator up to smoothing operators
independently of the choice of cut-off functions $\varphi$ and $\psi$.
It is continuous in the spaces $H^s(X) \rightarrow H^s(X)$ whenever
\begin{equation}\label{eq:weightGaForHs}
    \gamma = s + \dim X/2.
\end{equation}

The function $K_\gamma (\zeta)$ is called the \textit{symbol} of operator~\eqref{defeq:FM}.
There is a natural definition of ellipticity for Fourier--Mellin operators
given in terms of its symbol, and the corresponding finiteness theorem.
The index formula is also established (see~\cite{Losch2}).

Note that Fourier--Mellin operators can be associated with a finite set of points in an obvious way.

\subsection{Fourier integral operators}\label{subsec:operatorFIO}
For a more detailed exposition of Fourier integral operators
(and quantized canonical transformations)
given in terms of Maslov canonical operator see, e.g.,~\cite{MSS1, NOSS1}.

By a Fourier integral operator (FIO) $\Phi = \Phi(L,a)$ associated with a Lagrangian manifold $L$
and an amplitude $a$ we mean an integral operator whose Schwartz kernel is given
by a canonically represented function, that is, a function (more preciously, a distribution)
obtained as a result of applying a Maslov canonical operator
associated with $L$ (see~\cite{Mas1}) to amplitude $a$.
Given a manifold $M$, FIO $\Phi(L,a)$ on $M$ can be locally presented in the following way.

Let $L$ be a Lagrangian submanifold embedded into
$T^*_0(M \times M) = T^*(M \times M) \setminus \{0\}$, and $(w_I,w'_{I'})$
be canonical coordinates on $L$ given in the form
\begin{equation*}
    (w_I, w'_{I'})
    =
    ({x_{I}, p_{\overline{I}}}, {x'_{I'}, p'_{\overline{I'}}}),
\end{equation*}
where $(x,p; x',p')$ are canonical coordinates on $T^*(M \times M) = T^*M \times T^*M$,
$I$, $I'$ are collections of indices (subsets of collection $\{1,\dots,n\}$) and
$\overline{I}$, $\overline{I'}$ are their complements.
Let $\mu$ be a fixed measure on $L$, and $\mu(w_I, w'_{I'})$ be its density written in local coordinates.
We assume that $\mu(w_I, w'_{I'})$ is a homogeneous function of degree $0$ with respect to
$(p_{\overline{I}}, p'_{\overline{I'}})$.

Given a Lagrangian submanifold $L$ and a measure $\mu$ on it,
under some special conditions imposed on pair $(L,\mu)$
(see~\cite{MSS1}),
one has a well-defined Maslov canonical operator $\mathcal{K}_{(L,\mu)}$
acting between suitable spaces of functions on $L$
and distributions on $M \times M$.
Given $\mathcal{K}_{(L,\mu)}$, one defines FIO $\Phi(L,a)$ on $M$
by the formula
\begin{equation}\label{defeq:FIOasIO}
    \Phi(L,a) u(x) = \int K_\Phi(x,x') u(x') \, d{x'},
\end{equation}
where $K_\Phi(x,x') = \mathcal{K}_{(L,\mu)} \, a(w_I, w'_{I'})$.
Explicitly  (in local coordinates),
\begin{multline}\label{defeq:FIOKernel}
        K_\Phi(x,x') = \mathcal{K}_{(L,\mu)} \, a(w_I, w'_{I'}) = \\
            = \mathcal{F}^{-1}_{p_{\overline{I}} \rightarrow x_{\overline{I}}}
              \mathcal{F}_{p'_{\overline{I'}} \rightarrow x'_{\overline{I'}}}
            \left\{
                e^{i S({x_{I}, p_{\overline{I}}}, {x'_{I'}, p'_{\overline{I'}}})} \,
                a_\mu({x_{I}, p_{\overline{I}}}, {x'_{I'}, p'_{\overline{I'}}})
            \right\} = \\
            = (2\pi)^{-(\lvert I \rvert + \lvert \overline{I'} \rvert)/2} \iint
                e^{i S({x_{I}, p_{\overline{I}}}, {x'_{I'}, p'_{\overline{I'}}})
                    + i p_{\overline{I}}x_{\overline{I}} - i p'_{\overline{I'}}x'_{\overline{I'}}} \,
                a_\mu({x_{I}, p_{\overline{I}}}, {x'_{I'}, p'_{\overline{I'}}})
                \, d{p_{\overline{I}}}\, d{p'_{\overline{I'}}}.
\end{multline}
Here the function $a_\mu$ equals
$$
    a_\mu(w_I, w'_{I'}) =
        \sqrt{\mu(w_I, w'_{I'})} \,
        a(w_I, w'_{I'}),
$$
and $S(w_I, w'_{I'})$ is the generating function of Lagrangian submanifold $L$.
The integrals~\eqref{defeq:FIOasIO} and~\eqref{defeq:FIOKernel} are considered as oscillatory integrals
(see, e.g,~\cite{Shu1}).

We assume that $a$ is a smooth function on $L$,
homogeneous of some degree $m$ with respect to $(p_{\overline{I}}, p'_{\overline{I'}})$.
We also assume that in local coordinates $a$ is smoothed at zero.

The function $S(w_I, w'_{I'})$ is defined as a phase function in a classical way.
By this we mean that $S(w_I, w'_{I'})$ is homogeneous of degree $1$ with respect to
$(p_{\overline{I}}, p'_{\overline{I'}})$, has no critical points within its support,
and the matrix of its second derivatives has the maximal rank (is non-degenerate).

A global FIO $\Phi(L,a)$ is glued up from the expressions of the form~\eqref{defeq:FIOasIO},
and is unique up to smoothing operators. If the functions $\mu(w_I, w'_{I'})$ and $a(w_I, w'_{I'})$
satisfy the above homogeneity conditions, then $\Phi(L,a)$ is a continuous operator in the spaces
$$
    \Phi(L,a): H^s(M) \longrightarrow H^{s-m}(M).
$$

Given a homogeneous canonical transformation $g: T^*_0M \rightarrow T^*_0M$, by the \textit{quantizing procedure}
we mean the mapping taking $g$ to FIO $\Phi(L,a)$ for which $L$ is the graph of $g$.
Namely,
$$
    L = \operatorname{graph} g = \{ (g(m'),m') \; | \; m' \in T^*_0M \} \subset T^*_0M \times T^*_0M.
$$
The corresponding FIO is called the \textit{quantized canonical transformation}.
Further on we denote it by $\Phi(g,a)$ instead of $\Phi(\operatorname{graph} g ,a)$.

\section{Main theorem}\label{sec:mainth}
In this section we state the main theorem of this paper.

First let us fix some notation. Let $M$ be closed smooth manifold of dimension $n$,
and $i: X \hookrightarrow M$ be a smooth embedding of a submanifold $X$.
Let
$$
    w = (x,t,p,\tau)
$$
be local coordinates on $T^*M$ such that $X$ in these coordinates is given
by the equation
$t = 0$, and coordinates $p$,$\tau$ are dual with respect to $x$,$t$
(we will use physical terminology and call $p$,$\tau$ the \textit{momentum coordinates}
and $x$,$t$ the \textit{physical coordinates}).
By $T^*_0M = T^*M \setminus \{0\}$ we denote, as above,
the cotangent bundle $T^*M$ with removed zero section.
By
$$
    (w,w') = (x,t,p,\tau; \, x',t',p',\tau')
$$
we denote the local coordinates on the product $T^*(M \times M) = T^*M \times T^*M$.

Let
\begin{equation}\label{eq:canonicalTransformationFunctions}
    g: (x',t',p',\tau') \longmapsto (x,t,p,\tau)
\end{equation}
be a canonical transformation written in the above local coordinates.
By $\operatorname{graph} g$ we denote the graph of $g$,
$$
    \operatorname{graph} g = \{ (w, w') \; | \; w = g(w') \} \subset T^*_0M \times T^*_0M.
$$
Evidently, $\operatorname{graph} g$ is a Lagrangian submanifold in
$T^*_0M \times T^*_0M$ with respect to the symplectic form $\pi_1^* \omega - \pi_2^* \omega$,
where $\omega = dp \wedge dx + d\tau \wedge dt$ is the standard symplectic form on $T^*M$,
and $\pi_1$, $\pi_2$ are projections on the corresponding factors in the product $T_0^*M \times T_0^*M$.

Given  an (arbitrary) submanifold $L$ in $T^*(M \times M)$,
we call the intersection
$$
    L|_X = L \cap T^*(M \times M)|_{X \times X}
$$
the \textit{restriction of $L$ associated with the embedding $X \hookrightarrow M$}.

Now we state the conditions of our theorem and make main assumptions.

Let $x_0 \in X \subset M$ be a fixed point with coordinates $(x,t) = (0,0)$.
In the sequel we will assume that $g$ satisfies the following conditions.
\begin{itemize}
    \item[A)] The following equalities of sets hold
        \begin{equation*}
            g((T^*_0M)_{x_0}) = (T^*_0M)_{x_0},
            \quad
            \pi_X(\operatorname{graph} g|_X) = \{x_0\} \times \{x_0\},
        \end{equation*}
        where by $(T^*_0M)_{x_0}$ we denote the fiber of $T^*_0M$ over $x_0$,
        and $\pi_X$ is the projection $T^*(M\times M)|_{X \times X} \rightarrow X \times X$.
    \item[B)]
        In $(T^*_0M)_{x_0}$ transformation $g$ is of the form
        \begin{equation*}
            g:(0,0,p',\tau') \mapsto (0,0,h(p',\tau'), \tau),
        \end{equation*}
        where $h(p',\tau')$ is a mapping homogeneous of degree $1$ with respect to $(p',\tau')$,
        which defines the following diffeomorphisms:
        \begin{equation}\label{eq:h(p',tau')determinatesDiffeomorphisms}
            \begin{array}{lll}
                h: \, \mathbb{R}^{n/2} \longrightarrow \mathbb{R}^{n/2}, &
                    \tau' \longmapsto p &
                    \text{for } \left\lvert p' \right\rvert \neq 0, \\
                h: \, \mathbb{R}^{n/2}\setminus\{0\} \longrightarrow \mathbb{R}^{n/2}\setminus\{0\}, &
                    \tau' \longmapsto p &
                    \text{for } \left\lvert p' \right\rvert = 0.
            \end{array}
        \end{equation}
\end{itemize}

Let us note that condition B) is satisfied only when $\dim X = \dim M /2 = n/2$.
It follows that $X$ is a submanifold of middle dimension, and all the coordinates
$x$,$t$,$p$,$\tau$ consist of $n/2$ variables.
Condition A) means that $g$ takes the fiber over point $x_0 \in X \subset M$ into itself,
and all the other fibers are being mapped to the complement of this fiber.

For simplicity we make some more assumptions.

Let $(x,t,p',\tau')$ be the local coordinates on $\operatorname{graph} g$, coming from a neighbourhood of
the fiber over $\{x_0\} \times \{x_0\}$.
It follows that the corresponding generating function $S$
in this neighbourhood has the form
$$
    S \equiv S(x,t,p',\tau').
$$
We assume that $S(x,t,p',\tau')$ is linear with respect to physical variables $x,t$, and
amplitude $a$ is independent of physical variables, that is, $a$ has the form
$$
    a \equiv a(p',\tau').
$$
Let $\mu$ be a fixed measure on $\operatorname{graph} g$,
and let its density 
be also independent of physical variables
(so it has the form $\mu \equiv \mu(p',\tau')$).
Finally, let $a(p',\tau')$ be homogeneous of degree $m$,
and $\mu(p',\tau')$ be homogeneous of degree $0$. The canonical transformation $g$
is assumed to be homogeneous of degree $1$ (therefore,
$S(x,t,p',\tau')$ is homogeneous of degree $1$ with respect to $(p',\tau')$).

Let us assume that the quantized canonical transformation $\Phi(g,a)$, built on the above data,
acts in the spaces $H^s(M) \rightarrow H^{s-m}(M)$ for $s < 0$ and $s - m - n/2 > 0$.

The following statement is the main result of the present paper.
\begin{theorem}\label{th:main}
    Under the above conditions, the reduced trace $i^!_0(\Phi(g,a))$,
    associated with the embedding $i: X \hookrightarrow M$,
    is a Fourier--Mellin operator
    localized at $x_0$.
\end{theorem}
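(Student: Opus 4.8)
The plan is to compute the Schwartz kernel of $i^!(\Phi)$ explicitly, recognise it --- after conjugation by the Fourier transform --- as an integral operator with a homogeneous kernel on $\mathbb R^{n/2}$, and then identify the radial Mellin transform of that kernel with the operator-valued symbol $K_\gamma(\zeta)$ of a Fourier--Mellin operator.

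First I would write down the kernel~\eqref{defeq:FIOKernel} in the coordinates $(x,t,p',\tau')$ on $\operatorname{graph} g$: here $\overline{I}=\emptyset$ and $\overline{I'}$ is the full momentum collection, so
$$
    K_\Phi(x,t,x',t')=c\iint e^{\,i[S(x,t,p',\tau')-p'x'-\tau't']}\,a_\mu(p',\tau')\,dp'\,d\tau'.
$$
The coboundary $i_*$ inserts a factor $\delta(t')$ and the boundary $i^*$ restricts to $t=0$, so the kernel of $i^!(\Phi)$ on $X\times X$ is $K_\Phi(x,0,x',0)$. Since $S$ is linear in the physical variables and $1$-homogeneous in $(p',\tau')$ we may write $S=x\cdot P(p',\tau')+t\cdot T(p',\tau')+S_0(p',\tau')$ with all three terms $1$-homogeneous; the generating relations identify $P$ with the image momentum, and conditions A)--B) --- which say that the fibre over $x_0$ is mapped onto itself --- force $P=h$ and $\partial_{p'}S_0\equiv\partial_{\tau'}S_0\equiv0$, whence $S_0\equiv0$ (a $1$-homogeneous function with vanishing gradient is zero). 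Thus
$$
    K_{i^!\Phi}(x,x')=c\iint e^{\,i(x\cdot h(p',\tau')-p'x')}\,a_\mu(p',\tau')\,dp'\,d\tau'.
$$

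Next, for each fixed $p'$ I would substitute $\tau'\mapsto p:=h(p',\tau')$, the global diffeomorphism furnished by~\eqref{eq:h(p',tau')determinatesDiffeomorphisms} --- the two cases there being exactly what makes the substitution, and everything built from it, smooth across $p'=0$. This converts $e^{ix\cdot h}$ into the Fourier kernel $e^{ix\cdot p}$, and a short computation shows that $B:=\mathcal F\,i^!(\Phi)\,\mathcal F^{-1}$ is the integral operator on $\mathbb R^{n/2}$ with kernel
$$
    \mathcal B(q,p')=c\,a_\mu\bigl(p',\theta(p',q)\bigr)\,\bigl|\det\partial_q\theta(p',q)\bigr|,\qquad \theta(p',\cdot)=h(p',\cdot)^{-1}.
$$
Using the homogeneities one checks that $\theta$ is $1$-homogeneous and $\mathcal B$ is homogeneous of degree $m$, jointly in $(q,p')$, and smooth off the origin. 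Two consequences: first, the phase of the oscillatory integral for $K_{i^!\Phi}(x,x')$ has a stationary point in $(p',\tau')$ only at $(x,x')=(x_0,x_0)$ (because $\partial_{\tau'}h$ is invertible), so $K_{i^!\Phi}$ is smooth off that point and $i^!(\Phi)$ --- hence $i^!_0(\Phi)$, since the two differ by composition with the pseudolocal operators $\Delta_X^{(s-m-\nu)/2}$, $\Delta_X^{-s/2}$ --- is localized at $x_0$; second, the reduced trace $i^!_0(\Phi)$ is, modulo smoothing, $\mathcal F^{-1}B_0\mathcal F$ with $B_0$ the integral operator whose kernel $\widetilde{\mathcal B}(q,p')=|q|^{\,s-m-\nu}\,\mathcal B(q,p')\,|p'|^{-s}$ is homogeneous of degree $-\nu=-n/2$.

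A kernel homogeneous of degree $-n/2$ on $\mathbb R^{n/2}$ is, in spherical coordinates $q=r\omega$, $p'=r'\omega'$, a Mellin-convolution kernel in the radial variables, so the Mellin transform in $r$ conjugates $B_0$, on the weight line $\Gamma_\gamma$ with $\gamma=\dim X/2$ (the value~\eqref{eq:weightGaForHs} for $s=0$), into an operator-valued function $K_\gamma(\zeta)$ on $L^2(\mathbb S^{n/2-1})$ with kernel $k_\zeta(\omega,\omega')=c\int_0^\infty\sigma^{\,n/2-1-\zeta}\,\widetilde{\mathcal B}(\omega,\sigma\omega')\,d\sigma$. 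It remains to verify the three conditions on $K_\gamma$, and this is where the standing hypotheses $s<0$ and $s-m-n/2>0$ are essential: they give $\widetilde{\mathcal B}(\omega,\sigma\omega')\sim\sigma^{-s}\mathcal B(\omega,0)$ as $\sigma\to0$ and $\sim\sigma^{\,m-s}\mathcal B(0,\omega')$ as $\sigma\to\infty$, with $-s>0$ and $m-s<-n/2<0$; hence the defining integral for $k_\zeta$ converges and is holomorphic in a vertical strip containing $\Gamma_\gamma$ (condition~1), differentiation under the integral sign gives $k_\zeta\in C^\infty(\mathbb S^{n/2-1}\times\mathbb S^{n/2-1})$ so $K_\gamma(\zeta)$ is smoothing on the compact sphere, hence compact (condition~2), and the substitution $\sigma=e^u$ exhibits $k_{\gamma+i\lambda}(\omega,\omega')$ as the Fourier transform at frequency $\lambda$ of a smooth function of $u$ decaying exponentially as $u\to\pm\infty$ uniformly in $(\omega,\omega')$, so $\|K_\gamma(\gamma+i\lambda)\|\to0$ as $|\lambda|\to\infty$ by Riemann--Lebesgue (condition~3). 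Finally, inserting the cut-off $\psi(p)$ into $\mathcal F^{-1}B_0\mathcal F$ amounts to pre- and post-composing with the Fourier multiplier $\psi(D)$, which is the identity modulo smoothing since $1-\psi$ has compact support, and inserting $\varphi(x)$ is harmless by the localization already proved; so $i^!_0(\Phi)$ has the form~\eqref{defeq:FM}. The main obstacle is condition~3: one must turn the joint homogeneity of $\mathcal B$ together with the two sign conditions into a genuine decay estimate for the radial Mellin transform that is uniform over the sphere; the other delicate point is checking that the substitution $\tau'\mapsto h(p',\tau')$ and the kernel $\mathcal B$ are smooth across $p'=0$, which is precisely the role of the bifurcated statement~\eqref{eq:h(p',tau')determinatesDiffeomorphisms}.
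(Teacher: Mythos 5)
Your computational core follows the paper's route almost verbatim: the identification $S(x,0,p',\tau')=x\,h(p',\tau')$ (with $S_0\equiv 0$ because a $1$-homogeneous function with vanishing momentum gradient is zero) is the paper's Lemma on the generating function; the change of variables $\tau'\mapsto h(p',\tau')$ producing, after conjugation by $\mathcal F$, an integral operator with kernel homogeneous of degree $m$, the passage to the reduced trace with kernel homogeneous of degree $-n/2$, the radial Mellin transform, and the verification of analyticity in the strip $-s+m+n/2<\operatorname{Re}\zeta<-s+n/2$, compactness of the values, and decay along the weight line $\gamma=n/4$ all coincide with the paper's Sections on local and dual coordinates and on the reduction to the Fourier--Mellin structure.

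The genuine gap is in the localization step, which in your write-up rests on non-stationary phase applied to the single formula $K_{i^!\Phi}(x,x')=c\iint e^{\,i(x\cdot h(p',\tau')-p'x')}a_\mu\,dp'\,d\tau'$. That oscillatory integral represents only the contribution of the one canonical chart of $\operatorname{graph} g$ near the fiber over $(x_0,x_0)$; a quantized canonical transformation is glued from several such local expressions, and the standing assumptions ($S$ linear in $(x,t)$, $a$ and $\mu$ independent of the physical variables) are imposed only in that chart. You never show that the remaining pieces of $\Phi(g,a)$ contribute compactly to $i^*\Phi i_*$, so neither the localization at $x_0$ nor the subsequent reduction of the \emph{whole} operator to the Mellin form is established. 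This is exactly what the paper's wave-front argument supplies: using $\operatorname{WF}'(\Phi)\subset\operatorname{graph} g$, H\"ormander's composition theorem (admissible here because $N_0^*X\cap g^{-1}(N_0^*X)=\emptyset$, a consequence of A) and B)), and condition A) in the form $i^!(\operatorname{graph} g)=\bigl(T^*_0(X\times X)\bigr)_{(x_0,x_0)}$, one gets $\operatorname{WF}'(i^!\Phi)$ contained in the fiber over $(x_0,x_0)$ globally, which both gives localization and licenses the reduction to a single coordinate patch that your computation then carries out. To close your argument you would need this (or an equivalent global statement), not just stationarity of the local phase. A second, smaller point: your claim that inserting $\psi(p)$ is ``the identity modulo smoothing since $1-\psi$ has compact support'' is too quick on the noncompact model space --- the paper proves instead an explicit norm estimate (its Lemma on the operator $R$, via Cauchy--Schwarz and splitting the $(p,p')$-domain) showing the cut-off error gains $\varepsilon$ derivatives and is therefore compact; your version can be repaired by combining the multiplier argument with the $\varphi$-localization and the boundedness of the Mellin operator on a range of weights, but as stated it is an assertion rather than a proof.
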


\section{Proof of the main theorem}
We will divide the proof of Theorem~\ref{th:main} into two stages:
1) we show that $i^!(\Phi(g,a))$ is localized at $x_0$;
2) we describe an explicit structure of $i_0^!(\Phi(g,a))$ in local coordinates.

\subsection{Localization of $i^!(\Phi(g,a))$}
In this section we calculate the wave front set of $i^!(\Phi)$, in other words,
we study the problem of propagation of singularities for this operator.
It will turn out that its wave front set is concentrated in the fiber
over $x_0 \times x_0$, and this fact will imply the desired localization of our operator.

Within this section we do not suppose that assumptions discussed in Section~\ref{sec:mainth}
are made, but specify all the needed requirements explicitly.

As we have a new concept coming on stage, we start from fixing notations.
\begin{definition}
Let $A: C^\infty_0(M_2) \rightarrow \mathcal{D}'(M_1)$ be a linear operator between
function spaces (densities) on compact manifolds $M_2$ and $M_1$,
and let $K_A \in \mathcal{D}'(M_1 \times M_2)$ be its Schwartz kernel.
The \textit{twisted wave front set} (just \textit{wave front set} in the sequel) of the operator $A$
is a subset of $T^*_0(M_1 \times M_2)$, having the form
\begin{equation*}
    \operatorname{WF}'(A) =
    \{ (x_1,p_1; x_2,p_2) \in T^*_0M_1 \times T^*_0M_2 \; | \;
        (x_1,p_1; x_2,-p_2) \in \operatorname{WF}(K_A) \}.
\end{equation*}
Here by $(x_i,p_i)$ we denote the canonical coordinates
on $T^*_0M_i$, $i = 1,2$,
and $\operatorname{WF}(K_A)$ stands for the wave front set of the distribution $K_A$.
\end{definition}

Recall that if $A$ happens to be of the form $A \equiv \Phi(g,a)$, that is,
$A$ is the quantization of a canonical transformation $g$, then the following two relations hold:
\begin{multline*}
    \operatorname{WF}(\Phi(g,a) \, u) \subset \\ \subset
    \{(x_1,p_1) \in T^*_0M \; | \;
        \exists \, (x_2,p_2) \in \operatorname{WF}(u): (x_1,p_1; \, x_2,p_2) \in \operatorname{WF}'(\Phi(g,a)) \}
\end{multline*}
and
\begin{equation}\label{eq:WF'(FIO)subset(graphg)}
    \operatorname{WF}'(\Phi(g,a)) \subset \operatorname{graph} g
\end{equation}
(see~\S\S 25.1, 25.2 in~\cite{Hor4}).

Thus, $\operatorname{graph} g$ describes
propagation of singularities for the operator $\Phi(g,a)$.
Our nearest goal is to establish an analogous result for $i^!(\Phi(g,a))$.
It requires from us to introduce a new concept.

\begin{definition}\label{def:Sled(LagrangianManifold)}
Let $L$ be a submanifold in $T^*(M \times M)$.
The set
\begin{equation*}
    i^!(L) = \pi_{T^*X}(L|_X)
\end{equation*}
is called the \textit{trace
of submanifold $L$} associated with embedding $i: X \hookrightarrow M$.
Here $\pi_{T^*X}: T^*(M \times M)|_{X \times X} \rightarrow T^*(X \times X)$
is the natural projection corresponding to the embedding
$T(X \times X) \hookrightarrow T(M \times M)|_{X \times X}$ of vector bundles, and
$L|_X$ stands for the restriction of $L$ to $X$.
\end{definition}

Here is the desired result.
\begin{proposition}\label{prop:WF'(Sled)subsetSled(graphg)}
Let
\begin{equation}\label{eq:WF'(Sled)subsetSled(graphg)Condition}
    N_0^*X \cap g^{-1}(N_0^*X) = \emptyset,
\end{equation}
where $N_0^*X = N^*X \setminus \{0\}$ denotes the conormal bundle to $X$ in $T^*M$
with zero section deleted. Then
\begin{equation}\label{eq:WF'(SledFIO)subsetSled(graphg)}
    \operatorname{WF}'(i^!(\Phi(g,a))) \subset i^!(\operatorname{graph} g).
\end{equation}
\end{proposition}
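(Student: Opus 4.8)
The plan is to reduce the statement to the classical theorem on the wave front set of the restriction of a distribution to a submanifold.

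First one observes that the Schwartz kernel of $i^!(\Phi(g,a)) = i^*\,\Phi(g,a)\,i_*$ is simply the restriction of the kernel $K_\Phi$ of $\Phi(g,a)$ to the submanifold $X\times X\subset M\times M$. Indeed, working in local coordinates $(x,t)$ on $M$ in which $X=\{t=0\}$ and composing the explicit formulas~\eqref{defeq:defBOp},~\eqref{defeq:defCobOp} for $i^*$ and $i_*$ one gets
\begin{equation*}
    K_{i^!(\Phi)}(x,x') = K_\Phi(x,0;\,x',0) = \big((i\times i)^* K_\Phi\big)(x,x').
\end{equation*}
A priori this restriction need not exist; that it does --- so that $i^!(\Phi)$ is well defined at the level of kernels --- will be a by-product of the argument.

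Next I would feed hypothesis~\eqref{eq:WF'(Sled)subsetSled(graphg)Condition} into the transversality condition of the restriction theorem. By~\eqref{eq:WF'(FIO)subset(graphg)} and the definition of the twisted wave front set, $\operatorname{WF}(K_\Phi)\subset\{(w,w')\in T^*_0M\times T^*_0M\mid w=g(\sigma w')\}$, where $\sigma\colon(x,p)\mapsto(x,-p)$ is the fibrewise antipodal map; in particular both components of every point of $\operatorname{WF}(K_\Phi)$ lie off the zero section. On the other hand, over a point of $X\times X$ the conormal bundle of $X\times X$ in $M\times M$ is $N^*X\times N^*X$. Thus a common point of $\operatorname{WF}(K_\Phi)$ and $N^*(X\times X)$ would be a pair $(w,w')$ with $w=g(\sigma w')$, $w'\in N^*X$ and $w\in N^*X$; since $N^*X$ is a linear subbundle, $\sigma w'\in N^*X$ as well, and as neither component vanishes this forces $\sigma w'\in N_0^*X\cap g^{-1}(N_0^*X)$, which is empty by~\eqref{eq:WF'(Sled)subsetSled(graphg)Condition}. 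Hence $\operatorname{WF}(K_\Phi)\cap N^*(X\times X)=\emptyset$, and the restriction theorem (see~\cite{Hor4}) applies. It then yields that $(i\times i)^*K_\Phi$ is well defined with
\begin{equation*}
    \operatorname{WF}\big(K_{i^!(\Phi)}\big)\subset\big\{\,(x,\,\eta\rvert_{TX};\,x',\,\eta'\rvert_{TX})\;\big|\;(x,\eta;x',\eta')\in\operatorname{WF}(K_\Phi),\ x,x'\in X\,\big\},
\end{equation*}
where $\eta\rvert_{TX}$ is $\eta$ restricted to $TX$ --- precisely the fibrewise map entering $\pi_{T^*X}$ in Definition~\ref{def:Sled(LagrangianManifold)}. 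Passing back to the twisted wave front set on the left and to $\operatorname{graph} g$ on the right, the two occurrences of $\sigma$ cancel, and comparison with $i^!(\operatorname{graph} g)=\pi_{T^*X}(\operatorname{graph} g|_X)$ gives exactly~\eqref{eq:WF'(SledFIO)subsetSled(graphg)}.

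The step I expect to demand the most care --- the crux --- is the transversality computation: correctly identifying $N^*(X\times X)$ inside $T^*(M\times M)|_{X\times X}$ and recognizing that condition~\eqref{eq:WF'(Sled)subsetSled(graphg)Condition} is \emph{exactly} the hypothesis needed, all the while keeping track of the two independent sign twists (the antipodal map in the definition of $\operatorname{WF}'$, and the one relating $\operatorname{graph} g$ to $\operatorname{WF}(K_\Phi)$). One should also note that the ``exceptional'' zero-section terms that normally accompany composition/restriction statements for wave front sets do not appear here, because $g$ maps $T^*_0M$ onto $T^*_0M$ and hence $\operatorname{WF}(K_\Phi)$ stays away from the zero section in both factors.
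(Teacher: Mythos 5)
Your argument is correct, but it follows a genuinely different route from the paper. The paper treats $i^*$ and $i_*$ as Fourier integral operators attached to the Lagrangian manifolds $L_b$, $L_c$, proves the identity $i^!(L)=L_b\circ L\circ L_c$ (Lemma~\ref{lemma:Sled(LagrangianManifold)AsComposition}), and then applies H\"ormander's composition theorem (Theorem~\ref{th:WFComposition}) twice --- first to $\Phi i_*$, then to $i^*(\Phi i_*)$ --- checking at each stage that the extra zero-section terms are empty and that condition~\eqref{eq:WF'(Sled)subsetSled(graphg)Condition} guarantees the disjointness hypothesis~\eqref{eq:thWFCompositionCondition} for the second composition. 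You instead identify the Schwartz kernel of $i^!(\Phi)$ with the pullback $(i\times i)^*K_\Phi$ and invoke the pullback/restriction theorem for wave front sets (Theorem 8.2.4 in~\cite{Hor1}), with~\eqref{eq:WF'(Sled)subsetSled(graphg)Condition} entering exactly as the non-characteristic condition $\operatorname{WF}(K_\Phi)\cap N^*(X\times X)=\emptyset$; your transversality check, including the observation that $N^*X$ is stable under the fibrewise antipodal map and that neither component of a point of $\operatorname{WF}(K_\Phi)$ can vanish because $g$ maps $T^*_0M$ onto $T^*_0M$, is the precise analogue of the paper's verification that $\operatorname{WF}'_M(\Phi)$, $\operatorname{WF}'_X(i_*)$ are empty and that $\operatorname{WF}'_M(i^*)\cap\operatorname{WF}'_M(\Phi i_*)\subset N_0^*X\cap g^{-1}(N_0^*X)$. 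What your route buys is economy: a single application of one theorem, which simultaneously yields well-definedness of the restricted kernel and the inclusion of its wave front in $\pi_{T^*X}(\operatorname{graph}g|_X)$, with the two sign twists cancelling as you say. What the paper's route buys is that it stays at the operator level --- the intermediate composition $\Phi i_*$ is shown to be well defined in its own right, and the argument fits the boundary/coboundary formalism used throughout relative elliptic theory. The only point where you should add a line is the identity $K_{i^*\Phi i_*}=(i\times i)^*K_\Phi$: it is formal (adjointness of $i_*$ and $i^*$ plus approximation of the pullback by smoothings under the non-characteristic condition), but since the existence of the restriction is exactly what is at stake, it deserves an explicit justification comparable to the paper's step-by-step treatment.
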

\begin{remark}
Note that condition~\eqref{eq:WF'(Sled)subsetSled(graphg)Condition}
is satisfied whenever conditions A) and B) from Section~\ref{sec:mainth} are satisfied.
Thus, we will be able to apply Proposition~\ref{prop:WF'(Sled)subsetSled(graphg)}.
\end{remark}
\begin{proof}[Proof of Proposition~\ref{prop:WF'(Sled)subsetSled(graphg)}]
We will establish the formula~\eqref{eq:WF'(SledFIO)subsetSled(graphg)} in two steps:
1) we relate $i^!(\operatorname{graph} g)$ with a set obtained as a composition of three Lagrangian manifolds;
2) we compute $\operatorname{WF}'(i^!(\Phi(g,a)))$ using the fact that $i^!(\Phi(g,a))$ is a composition
of three integral operators.
It will turn out that these calculations result in the same set.

\textbf{Step 1.}
Recall that
$i^*$, $i_*$ are in fact Fourier integral operators
associated with the following Lagrangian manifolds
\begin{equation*}
    \begin{split}
        L_b & = \{ (x,t,p,\tau; \, x',p') \; | \; x' = x, \, p' = p, \, t = 0\} \subset (T^*M\times T^*X) \setminus \{0\}, \\
        L_c & = \{ (x,p; \, x',t',p',\tau') \; | \; x' = x, \, p' = p, \, t' = 0 \} \subset (T^*X\times T^*M) \setminus \{0\}
    \end{split}
\end{equation*}
(here the subscript ``b'' stands for ``boundary'' and the subscript ``c'' stands for ``coboundary'').

We are going to consider their compositions with $\operatorname{graph} g$.
Let us remind what this means.

\begin{definition}
Let $L_1$ be a subset in $T^*M_1\times T^*M_2$,
and let $L_2$ be a subset in $T^*M_2\times T^*M_3$.
Then their \textit{composition $L_2 \circ L_1$} is a subset in $T^*M_1\times T^*M_3$
defined as follows
\begin{equation*}
    L_2 \circ L_1 =
        \{ (w_1,w_3) \in T^*M_1\times T^*M_3 \; | \; \exists \, w_2\in T^*M_2: (w_1,w_2)\in L_1, \, (w_2,w_3)\in L_2 \}.
\end{equation*}
\end{definition}

\begin{lemma}\label{lemma:Sled(LagrangianManifold)AsComposition}
Let $L$ be a subset in $T^*(M \times M)$.
Then
$$
    i^!(L) = L_b \circ L \circ L_c.
$$
\end{lemma}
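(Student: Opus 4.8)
The plan is to verify the set equality $i^!(L) = L_b \circ L \circ L_c$ by unwinding both sides into explicit conditions on the canonical coordinates and checking they coincide. First I would recall the two ingredients. On the left, $i^!(L) = \pi_{T^*X}(L|_X)$ where $L|_X = L \cap T^*(M\times M)|_{X\times X}$; since $X$ is cut out by $t=0$ (and $X\times X$ by $t=t'=0$), a point of $L|_X$ has coordinates $(x,0,p,\tau;\, x',0,p',\tau')$ with the $(x,p;x',p';\,$full point$)$ lying on $L$, and the projection $\pi_{T^*X}$ simply drops the conormal directions $\tau,\tau'$, yielding $(x,p;x',p')\in T^*X\times T^*X$. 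So
$$
    i^!(L) = \{(x,p;x',p') \;|\; \exists\,\tau,\tau':\ (x,0,p,\tau;\,x',0,p',\tau')\in L\}.
$$
On the right, I would compute the composition from the inside out using the definition of $\circ$ and the explicit forms of $L_b$ and $L_c$ given just above the lemma.

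The key steps are then: (1) compute $L \circ L_c$. By definition this is the set of $(x_1,p_1;\,w)$, $w\in T^*M$, such that there is an intermediate point $w_2\in T^*M$ with $((x_1,p_1),w_2)\in L_c$ and $(w_2,w)\in L$. Reading off $L_c$, the condition $((x_1,p_1),w_2)\in L_c$ forces $w_2 = (x_1,0,p_1,\tau')$ for some free $\tau'$ (since $L_c$ requires $x'=x$, $p'=p$, $t'=0$ with $\tau'$ unconstrained). Hence $L\circ L_c = \{(x_1,p_1;\,w)\;|\;\exists\tau':\ ((x_1,0,p_1,\tau'),w)\in L\}$. (2) Compose on the left with $L_b$: $L_b\circ(L\circ L_c)$ is the set of $(w;\,x_3,p_3)$ for which there is $w_2'\in T^*M$ with $(w_2',x_3,p_3)\in$ wait—$L_b$ sits in $T^*M\times T^*X$, so it takes the $T^*M$-slot of $L\circ L_c$ as input; reading off $L_b$, membership $(w_2',(x_3,p_3))\in L_b$ forces $w_2' = (x_3,0,p_3,\tau)$ with $\tau$ free. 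Substituting, $L_b\circ L\circ L_c = \{((x_1,p_1),(x_3,p_3))\;|\;\exists\,\tau,\tau':\ ((x_1,0,p_1,\tau'),(x_3,0,p_3,\tau))\in L\}$. (3) Compare with the expression for $i^!(L)$ above: after renaming $(x_1,p_1)\leftrightarrow(x',p')$ and $(x_3,p_3)\leftrightarrow(x,p)$ (and matching the order convention $T^*M_1\times T^*M_3$ in the composition definition against the order used in $i^!$), the two sets are literally the same — both say "there exist conormal components $\tau,\tau'$ making the point with $t=t'=0$ lie on $L$."

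The main obstacle is not any deep geometry but rather bookkeeping: keeping the ordering of the two cotangent factors consistent between the composition convention (which reads $L_2\circ L_1 \subset T^*M_1\times T^*M_3$, first factor from $L_1$) and the convention implicit in $i^!(L)$ and in the definitions of $L_b$ (which lives in $T^*M\times T^*X$) and $L_c$ (in $T^*X\times T^*M$). I would set up the index notation carefully once at the start, note that $L_c$ injects the "free conormal momentum at the source" and $L_b$ injects the "free conormal momentum at the target," and then the intersection-with-$\{t=t'=0\}$-then-forget-$\tau,\tau'$ description of $i^!(L)$ falls out immediately. A remark worth including: the lemma is purely set-theoretic and uses nothing about $L$ being Lagrangian or about $g$ being a canonical transformation, which is why it applies equally to $L=\operatorname{graph}g$ in the subsequent Step 2 of Proposition \ref{prop:WF'(Sled)subsetSled(graphg)}.
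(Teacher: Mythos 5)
Your computation is essentially the paper's own proof: the paper, too, simply unwinds the composition to get $L_b \circ L \circ L_c = \{(x,p;\,x',p') \;|\; \exists\,(\tau,\tau'):\ (x,0,p,\tau;\,x',0,p',\tau')\in L\}$ and observes that this is precisely Definition~\ref{def:Sled(LagrangianManifold)AsComposition}'s set $i^!(L)$; your steps (1)--(2) reproduce exactly this, and your closing remark that nothing Lagrangian is used is correct. One correction to step (3): no transposition is needed, and the renaming you wrote is backwards. With the paper's convention for $L_2\circ L_1$, the first slot of $L_b\circ L\circ L_c$ comes from the first ($T^*X$) slot of $L_c$, whose associated $T^*M$ point $(x_1,0,p_1,\tau')$ occupies the \emph{unprimed} slot of $L$, while $L_b$ pins the primed slot to $(x_3,0,p_3,\tau)$; since in $i^!(L)=\pi_{T^*X}(L|_X)$ the first $T^*X$ factor likewise comes from the unprimed factor of $T^*(M\times M)$, the identification is $(x_1,p_1)\leftrightarrow(x,p)$ and $(x_3,p_3)\leftrightarrow(x',p')$. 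Had you actually performed the swap $(x_1,p_1)\leftrightarrow(x',p')$, you would be asserting equality of $i^!(L)$ with the transpose of the composed relation, which is a genuinely different set for a non-symmetric $L$ such as $\operatorname{graph} g$; your derived formula is right, so only that last sentence needs fixing.
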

\begin{proof}
A straightforward calculation gives
\begin{equation*}
    L_b \circ L \circ L_c = \{(x,p;\,x',p') \; | \;
        \exists \, (\tau,\tau'): \,
        (x,0,p,\tau;\, x',0,p',\tau') \in L\}.
\end{equation*}
It is clear that the set on the right hand side
agrees with Definition~\ref{def:Sled(LagrangianManifold)}. Thus, the lemma is proved.
\end{proof}

\textbf{Step 2.}
Given an operator $A: C^\infty_0(M_2) \rightarrow \mathcal{D}'(M_1)$ and its Schwartz kernel $K_A$,
consider the following two sets
\begin{equation*}
    \begin{split}
        \operatorname{WF}'_{M_1}(A) & =
            \{ (x_1,p_1) \in T^*_0M_1 \; | \; \exists \, x_2 \in M_2: \, (x_1,p_1; \, x_2,0) \in \operatorname{WF}'(A) \}, \\
        \operatorname{WF}'_{M_2}(A) & =
            \{ (x_2,p_2) \in T^*_0M_2 \; | \; \exists \, x_1 \in M_1: \, (x_1,0; \, x_2,p_2) \in \operatorname{WF}'(A) \}.
    \end{split}
\end{equation*}

\begin{theorem}\label{th:WFComposition}
(see Theorem 8.2.14,~\cite{Hor1}).
Let $M_1$, $M_2$, $M_3$ be compact manifolds,
and let
\begin{equation*}
    \begin{split}
        & A: C^\infty_0(M_3) \longrightarrow \mathcal{D}'(M_2), \\
        & B: C^\infty_0(M_2) \longrightarrow \mathcal{D}'(M_1),
    \end{split}
\end{equation*}
be operators for which
\begin{equation}\label{eq:thWFCompositionCondition}
    \operatorname{WF}'_{M_2}(A) \cap \operatorname{WF}'_{M_2}(B) = \emptyset.
\end{equation}
Then the composition $B \circ A: C^\infty_0(M_3) \rightarrow \mathcal{D}'(M_1)$
is well-defined, and the following relation holds
\begin{equation}\label{eq:WFCompositionFormula}
    \begin{split}
        \operatorname{WF}'(B \circ A) \subset
        \operatorname{WF}'(B) \circ WF'(A) \,
        & \bigcup \, \{(x_1,0) \in T^*M_1\} \times \operatorname{WF}'_{M_3}(A) \, \bigcup \\
        & \bigcup \, \operatorname{WF}'_{M_1}(B) \times \{(x_3,0) \in T^*M_3\}.
    \end{split}
\end{equation}
\end{theorem}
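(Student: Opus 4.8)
The plan is to treat Theorem~\ref{th:WFComposition} as what it is, namely a special case of H\"ormander's calculus for the wave front set under the three elementary operations on distributions --- pullback by a submersion, multiplication, and pushforward along a proper fibration --- applied to Schwartz kernels. Write $K_A\in\mathcal{D}'(M_2\times M_3)$ and $K_B\in\mathcal{D}'(M_1\times M_2)$ for the kernels of $A$ and $B$, and let $\pi_{12},\pi_{23},\pi_{13}$ be the coordinate projections of $M_1\times M_2\times M_3$ onto $M_1\times M_2$, $M_2\times M_3$, $M_1\times M_3$ respectively. The kernel of $B\circ A$ is, formally, $\int_{M_2}K_B(x_1,x_2)\,K_A(x_2,x_3)\,dx_2$, which I would interpret rigorously as
$$
    K_{B\circ A} = (\pi_{13})_{*}\bigl(\pi_{12}^{*}K_B \cdot \pi_{23}^{*}K_A\bigr),
$$
so that the whole argument reduces to justifying the right-hand side and tracking wave front sets through each of the three operations.

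\textbf{Pullback and product.} Since $\pi_{12}$ and $\pi_{23}$ are submersions, the pullbacks $\pi_{12}^{*}K_B$ and $\pi_{23}^{*}K_A$ are automatically defined, and the pullback theorem ($\S 8.2$ of~\cite{Hor1}) gives that $\operatorname{WF}(\pi_{12}^{*}K_B)$ consists of the covectors $(x_1,x_2,x_3;\xi_1,\xi_2,0)$ with $(x_1,x_2;\xi_1,\xi_2)\in\operatorname{WF}(K_B)$, and $\operatorname{WF}(\pi_{23}^{*}K_A)$ of the covectors $(x_1,x_2,x_3;0,\eta_2,\eta_3)$ with $(x_2,x_3;\eta_2,\eta_3)\in\operatorname{WF}(K_A)$. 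The product of two distributions exists as soon as their wave front sets contain no pair of opposite nonzero covectors over a common point; here such a pair would satisfy $(\xi_1,\xi_2,0)=-(0,\eta_2,\eta_3)$, forcing $\xi_1=0$, $\eta_3=0$, $\xi_2=-\eta_2$, which --- once unwound through the twisting convention --- says exactly that $\operatorname{WF}'_{M_2}(B)$ and $\operatorname{WF}'_{M_2}(A)$ meet over a common point of $T^{*}_0M_2$. Thus hypothesis~\eqref{eq:thWFCompositionCondition} is precisely the condition making $w:=\pi_{12}^{*}K_B\cdot\pi_{23}^{*}K_A$ well-defined, and the product rule for wave front sets bounds $\operatorname{WF}(w)$ by the covectors $(\xi_1,\xi_2+\eta_2,\eta_3)$ with $(x_1,x_2;\xi_1,\xi_2)\in\operatorname{WF}(K_B)\cup\{0\}$ and $(x_2,x_3;\eta_2,\eta_3)\in\operatorname{WF}(K_A)\cup\{0\}$, not both zero; the two ``$\cup\{0\}$'' alternatives are what will ultimately produce the second and third terms of~\eqref{eq:WFCompositionFormula}.

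\textbf{Pushforward.} Because $M_2$ is compact, $\pi_{13}$ is proper, so integration along the $M_2$-fibre is a well-defined operation $\mathcal{D}'(M_1\times M_2\times M_3)\to\mathcal{D}'(M_1\times M_3)$; its effect on wave front sets is to keep only those covectors of $\operatorname{WF}(w)$ whose $M_2$-component vanishes and project them to $T^{*}(M_1\times M_3)$. Imposing $\xi_2+\eta_2=0$ on the generic covector above and translating back through the twist yields the composition $\operatorname{WF}'(B)\circ\operatorname{WF}'(A)$; imposing it on the two degenerate alternatives, where one of $\xi$, $\eta$ is zero, yields $\{(x_1,0)\}\times\operatorname{WF}'_{M_3}(A)$ and $\operatorname{WF}'_{M_1}(B)\times\{(x_3,0)\}$ respectively. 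Collecting the three contributions is exactly~\eqref{eq:WFCompositionFormula}, while the existence of $w$ from the previous step together with the properness of $\pi_{13}$ shows that $B\circ A$ is well-defined.

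The one part I would carry out with real care --- routine but genuinely error-prone --- is the sign bookkeeping: the statement is phrased in terms of the twisted wave front set $\operatorname{WF}'$, whereas the pullback--product--pushforward machinery is stated for the ordinary $\operatorname{WF}$, so at every stage one must flip the sign of the covector over the ``second'' factor and then verify that the flips compose correctly, so as to land on $\operatorname{WF}'(B)\circ\operatorname{WF}'(A)$ rather than on some reflected variant of it. Everything else is a direct appeal to the three cited results; indeed this is literally Theorem~$8.2.14$ of~\cite{Hor1}, so within the paper it is enough to quote it, and the plan above is simply the route by which one reconstructs the proof.
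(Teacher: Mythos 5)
The paper does not actually prove this statement: it is quoted as Theorem 8.2.14 of \cite{Hor1} and used as a black box, so there is no internal argument to compare yours against. Your reconstruction is the standard proof of that theorem and is essentially correct: writing $K_{B\circ A}=(\pi_{13})_*\bigl(\pi_{12}^*K_B\cdot\pi_{23}^*K_A\bigr)$, the pullback and pushforward steps are unproblematic (all three manifolds are compact, so $\pi_{13}$ is proper), and you correctly identify hypothesis~\eqref{eq:thWFCompositionCondition} --- after unwinding the twist --- as exactly the no-opposite-nonzero-covectors condition needed for the product of the two pulled-back kernels to exist, with the two degenerate ``zero covector'' alternatives in the product rule producing, after the constraint $\xi_2+\eta_2=0$ from the pushforward, the second and third terms of~\eqref{eq:WFCompositionFormula}. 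The one point worth making explicit is what ``well-defined'' means: either take the kernel formula above as the definition of $B\circ A$, or invoke the companion result (Theorem 8.2.13 of \cite{Hor1}) that $B$ extends to compactly supported distributions $v$ with $\operatorname{WF}(v)\cap\operatorname{WF}'_{M_2}(B)=\emptyset$, so that $B(Au)$ makes sense for $u\in C^\infty_0(M_3)$ and coincides with the kernel-level composition. As you say, the sign bookkeeping in the twist is where an error would most easily creep in, but your identification of the obstruction as a common point of $\operatorname{WF}'_{M_2}(A)$ and $\operatorname{WF}'_{M_2}(B)$ is the correct outcome of that computation, so within the paper it is indeed enough to quote the theorem, as the author does.
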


We are going to apply theorem~\ref{th:WFComposition} twice: first to the composition
$\Phi i_*$ and then to the composition $i^* (\Phi i_*)$. Having this in mind, we note
that the following obvious inclusions are valid
\begin{equation}\label{eq:WFof(co)boundaryOperators}
    \operatorname{WF}'(i^*) \subset L_b, \quad \operatorname{WF}'(i_*) \subset L_c.
\end{equation}

Now, consider the composition
$$
    \Phi i_*: C^\infty_0(X) \longrightarrow \mathcal{D}'(M).
$$
We start from checking condition~\eqref{eq:thWFCompositionCondition} for bigger sets.
We have
\begin{equation*}
    \begin{split}
        \operatorname{WF}'_{M}(\Phi)
            & \subset \{(x,t,p,\tau) \in T^*_0M \; | \;
                \exists \, (x',t'): \, (x,t,p,\tau; \, x',t',0,0) \in \operatorname{graph} g \} = \\
            & = \{(x,t,p,\tau) \in T^*_0M \; | \; \exists \, (x',t'): \, (x',t',0,0) = g^{-1}(x,t,p,\tau) \}. 
    \end{split}
\end{equation*}
But the last set is empty, since any point in $T^*M$ with coordinates $(x',t',0,0)$
belongs to the zero section of $T^*M$, so it can not happen to be in the image of $g$.
Thus, condition~\eqref{eq:thWFCompositionCondition} is satisfied,
and we have the following
\begin{equation}\label{eq:WFCompositionCob}
    \begin{split}
        \operatorname{WF}'(\Phi i_*) \, \subset \, \operatorname{graph} g \, \circ \, L_c \,
            & \bigcup \, \{(x,0)\} \times \operatorname{WF}'_M(\Phi) \, \bigcup \\
            & \bigcup \, \operatorname{WF}'_X(i_*) \times \{ (x',t',0,0) \},
    \end{split}
\end{equation}
where $\operatorname{WF}'_M(\Phi)$ corresponds to the second factor in $T^*M \times T^*M$.

Consider the last term in union~\eqref{eq:WFCompositionCob}. We have
\begin{equation*}
    \begin{split}
        \operatorname{WF}'_{X}(i_*)
            & \subset \{(x,p) \in T^*_0M \; | \; \exists \, (x',t') \in M: \, (x,p; x',t',0,0) \in L_c \} = \\
            & = \{(x,p) \in T^*_0M \; | \; x' = x, \, t' = 0, \, p = 0 \} = \\
            & = \{(x,0) \in T^*_0M \} = \emptyset,
    \end{split}
\end{equation*}
since $T^*_0M$ does not contain the zero section. Thus the last term in union~\eqref{eq:WFCompositionCob} is empty.

Consider the second term in~\eqref{eq:WFCompositionCob}. By definition,
\begin{equation*}
    \operatorname{WF}'_{M}(\Phi)
    \subset \{(x',t',p',\tau') \in T^*_0M \; | \; \exists \, (x,t) \in M: \, (x,t,0,0) = g(x',t',p',\tau')\}.
\end{equation*}
Again, since $g$
is a diffeomorphism $T^*_0M \rightarrow T^*_0M$, it follows that $\operatorname{WF}'_{M}(\Phi) = \emptyset$.
Hence the second term in~\eqref{eq:WFCompositionCob} is also empty.

Eventually we have
$$
    \operatorname{WF}'(\Phi i_*) \subset L \circ L_c.
$$

Now, consider the composition $i^* (\Phi i_*)$. Arguing as above, we see that
in general condition~\eqref{eq:thWFCompositionCondition} is not satisfied.
Namely, having calculated $\operatorname{WF}'_M(i^*)$ and $\operatorname{WF}'_M(\Phi i_*)$
we get
\begin{equation*}
    \operatorname{WF}'_M(i^*)\cap \operatorname{WF}'_M(\Phi i_*) \subset N_0^*X \cap g^{-1}(N_0^*X).
\end{equation*}
However, in our setting the set on the right hand side is empty by assumption,
and this allows us to apply Theorem~\ref{th:WFComposition} in this situation as well.
A calculation shows that
$$
    \operatorname{WF}'(i^* \circ (\Phi \circ i_*)) \subset L_b \circ (L \circ L_c).
$$
This completes step 2.

Applying Lemma~\ref{lemma:Sled(LagrangianManifold)AsComposition},
we get the desired result.
Proposition~\ref{prop:WF'(Sled)subsetSled(graphg)} is proved.
\end{proof}

Now we return to the setting of theorem~\ref{th:main}.
\begin{lemma}\label{lemma:localizedInAPoint}
    $i^!(\Phi(g,a))$ is localized at $x_0$.
\end{lemma}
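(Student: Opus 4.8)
The plan is to combine Proposition~\ref{prop:WF'(Sled)subsetSled(graphg)} with condition A) of Section~\ref{sec:mainth} to show that the wave front set of $i^!(\Phi(g,a))$ is concentrated in the cotangent fibre over the single point $(x_0,x_0) \in X \times X$; once this is established, the localization is a routine consequence of the fact that smoothing operators on a closed manifold are compact between Sobolev spaces.

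First I would invoke Proposition~\ref{prop:WF'(Sled)subsetSled(graphg)}. It applies in our setting because, as recorded in the Remark following it, conditions A) and B) imply hypothesis~\eqref{eq:WF'(Sled)subsetSled(graphg)Condition}. Hence
$$
    \operatorname{WF}'(i^!(\Phi(g,a))) \subset i^!(\operatorname{graph} g) = \pi_{T^*X}(\operatorname{graph} g|_X).
$$
Next I would project to the base. By Definition~\ref{def:Sled(LagrangianManifold)}, the image of $\pi_{T^*X}(\operatorname{graph} g|_X)$ under $T^*(X\times X)\to X\times X$ is precisely $\pi_X(\operatorname{graph} g|_X)$, and condition A) states that this equals $\{x_0\}\times\{x_0\}$. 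Therefore $\operatorname{WF}'(i^!(\Phi(g,a)))$ lies in the cotangent fibre of $T^*_0(X\times X)$ over $(x_0,x_0)$, and consequently the singular support of the Schwartz kernel $K$ of $i^!(\Phi(g,a))$ is contained in $\{(x_0,x_0)\}\subset X\times X$.

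Then I would conclude by the standard cut-off argument. Let $\varphi$ be a cut-off function vanishing near $x_0$, so that $x_0\notin\operatorname{supp}\varphi$. The Schwartz kernel of $\varphi\, i^!(\Phi(g,a))$ is $\varphi(x)K(x,x')$; multiplication by the smooth function $\varphi(x)$ does not enlarge the wave front set, while the singular support of $\varphi(x)K(x,x')$ is contained in $(\operatorname{supp}\varphi\times X)\cap\{(x_0,x_0)\}=\emptyset$. Hence this kernel is smooth on $X\times X$, so $\varphi\, i^!(\Phi(g,a))$ is a smoothing operator; since $X$ is closed, it is bounded between any two Sobolev spaces and factors through a compact Rellich embedding, hence is compact in the spaces where $i^!(\Phi(g,a))$ acts. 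The same argument with the variable $x'$ shows $i^!(\Phi(g,a))\,\varphi$ is smoothing and therefore compact. By the definition of localization in Section~\ref{section:operatorFM}, this proves the lemma.

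I expect no deep obstacle here: the substance is entirely carried by Proposition~\ref{prop:WF'(Sled)subsetSled(graphg)} and condition A), and what remains is bookkeeping. The two points to watch are (i) tracking the twisted (sign-flipped) wave front convention consistently through the projection step, and (ii) the precise reading of a cut-off "vanishing at $x_0$" — if one only assumes $\varphi(x_0)=0$ rather than $\varphi\equiv 0$ in a neighbourhood, one first splits $\varphi$ via a partition of unity into a piece supported away from $x_0$ (handled above) and a piece supported in a small ball around $x_0$, and reduces to the former; with the neighbourhood reading the kernel-smoothness argument applies directly.
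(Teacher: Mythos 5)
Your argument is correct and is essentially the paper's own proof: both invoke Proposition~\ref{prop:WF'(Sled)subsetSled(graphg)} together with condition A) to confine $\operatorname{WF}'(i^!(\Phi(g,a)))$ to the cotangent fibre over $(x_0,x_0)$, and then conclude that $\varphi\, i^!(\Phi)$ and $i^!(\Phi)\,\varphi$ have smooth Schwartz kernels, hence are compact. The only tangential caveat is your final parenthetical: the paper, like your main argument, simply takes $\varphi$ vanishing in a neighbourhood of $x_0$, and the suggested partition-of-unity reduction for cut-offs with merely $\varphi(x_0)=0$ would not by itself dispose of the piece supported near $x_0$ --- but this side remark does not affect the proof.
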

\begin{proof}
It is easy to see that in terms of Definition~\ref{def:Sled(LagrangianManifold)},
condition A) is equivalent to the following equality of sets
\begin{equation}\label{eq:sled(graphg)equalsFiber}
    i^!(\operatorname{graph} g) = \bigl(T^*_0(X \times X)\bigr)_{(x_0,x_0)},
\end{equation}
where by $(T^*_0(X \times X))_{(x_0,x_0)}$ we denote the fiber of $T^*_0(X \times X)$ over $(x_0,x_0)$.
Now, the desired result immediately follows from Proposition~\ref{prop:WF'(Sled)subsetSled(graphg)}.

Indeed, consider operators $\varphi \, i^!(\Phi)$ and $i^!(\Phi) \, \varphi$,
where by $\varphi$ we mean a multiplication operator by a smooth function
$\varphi \in C^\infty(X)$ vanishing in a neighbourhood of $x_0$.
The kernels of these operators have the form
$$
    K_{\varphi \, i^!(\Phi)}(x,x') = \varphi(x) \, K_{i^!(\Phi)}(x,x'), \quad
    K_{i^!(\Phi) \, \varphi}(x,x') = \varphi(x') \, K_{i^!(\Phi)}(x,x').
$$
Since multiplication by a smooth function does not increase the wave front set, it follows that
$\operatorname{WF}'(\varphi \, i^!(\Phi))$ and $\operatorname{WF}'(i^!(\Phi) \, \varphi)$
are subsets in $\operatorname{WF}'(i^!(\Phi)) \subset i^!(\operatorname{graph} g)$,
and, since~\eqref{eq:sled(graphg)equalsFiber} holds,
they must be subsets in the fiber $(T^*_0(X \times X))_{(x_0,x_0)}$.
But this is impossible because $x_0$ lies outside the support of $\varphi$.
Therefore, both $\operatorname{WF}'(\varphi \, i^!(\Phi))$ and $\operatorname{WF}'(i^!(\Phi) \, \varphi)$
are empty, hence, $\varphi \, i^!(\Phi)$ and $i^!(\Phi) \, \varphi$
turn out to be integral operators with smooth kernels. It follows that they are compact,
and this completes the proof.
\end{proof}

\subsection{Calculations in local coordinates}

Since the trace $i^!(\Phi(g,a))$ is localized at $x_0$,
it suffices to consider it in a fixed coordinate neighbourhood of $x_0$.
Having this in mind we can assume that $M$ is realised as
(a domain in) Euclidean space $\mathbb{R}^n$, and $X$ is its subspace $\mathbb{R}^{n-\nu} \subset \mathbb{R}^n$
(recall that by our assumptions $\nu = n/2$).

From now on we write $\mathbb{R}^n$ or $\mathbb{R}^n_{x,t}$
instead of $M$, and $\mathbb{R}^{n/2}$ or $\mathbb{R}^{n/2}_x$ instead of $X$.
The embedding $i: X \hookrightarrow M$ is considered as embedding
$\mathbb{R}^{n/2}_x \hookrightarrow \mathbb{R}^{n}_{x,t}$ of Euclidean subspaces.
By $K_A$ we denote the kernel of an integral operator $A$.
Every integral in this section is being interpreted in the sense of distributions
(as an oscillatory integral)
unless otherwise stated or is clear from context.

Now, our operator $\Phi = \Phi(g,a)$ can be rewritten in the form
\begin{equation}\label{defeq:localFIO}
    \Phi u(x,t) =
        \iint
        \mathcal{F}_{(p',\tau') \rightarrow (x',t')} \, \biggl\{ e^{i S(x,t,p',\tau')} a_\mu(p',\tau')\biggr\}
        \, u(x',t') \, d{x'}\, d{t'}.
\end{equation}
In other words, $\Phi$ is an integral operator with the following Schwartz kernel
\begin{equation*}
    K_\Phi(x,t,x',t') = \mathcal{F}_{(p',\tau') \rightarrow (x',t')} \, \biggl\{ e^{i S(x,t,p',\tau')} a_\mu(p',\tau')\biggr\}.
\end{equation*}

\begin{proposition}\label{prop:sledFIO}
$i^!(\Phi)$ is an integral operator with the following Schwartz kernel
\begin{equation}\label{eq:kernelSledFIO}
    K_{i^!(\Phi)}(x,x') =
        \int \mathcal{F}_{p' \rightarrow x'} \,
        \biggl\{ e^{i S(x,0,p',\tau')} a_\mu(p',\tau') \biggr\} \, d{\tau'}.
\end{equation}

\end{proposition}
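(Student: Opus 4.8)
The plan is to compute $i^!(\Phi)=i^*\,\Phi\,i_*$ directly on the level of Schwartz kernels. First I would record that the composition $i^*\,\Phi\,i_*$ is in fact well defined: this is exactly what was checked in the course of proving Proposition~\ref{prop:WF'(Sled)subsetSled(graphg)}, since under assumptions A) and B) the hypothesis~\eqref{eq:WF'(Sled)subsetSled(graphg)Condition} holds, and then, by~\eqref{eq:WF'(FIO)subset(graphg)}, the wave front set of $K_\Phi$ avoids the conormal directions of the submanifold $\{t=0\}\times\{t'=0\}\subset M\times M$. Consequently the distribution $K_\Phi(x,t,x',t')$ admits a restriction to $\{t=0,\ t'=0\}$, and the manipulations with oscillatory integrals performed below are legitimate.

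Second, I would apply the three factors in turn. Using~\eqref{defeq:defCobOp}, the coboundary operator sends a test function $u$ on $X=\mathbb{R}^{n/2}_{x'}$ to the distribution $(i_*u)(x',t')=u(x')\,\delta(t')$ on $M=\mathbb{R}^n_{x',t'}$. Substituting this into the integral representation~\eqref{defeq:localFIO} of $\Phi$ and integrating out $t'$ against $\delta(t')$ gives $(\Phi\,i_*u)(x,t)=\int K_\Phi(x,t,x',0)\,u(x')\,dx'$. Finally the boundary operator~\eqref{defeq:defBOp} restricts the output from $\mathbb{R}^n_{x,t}$ to $X$, i.e.\ sets $t=0$. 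Hence $i^!(\Phi)$ is the integral operator whose kernel is
\[
    K_{i^!(\Phi)}(x,x') = K_\Phi(x,0,x',0).
\]

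Third, I would insert the explicit formula for $K_\Phi$ and simplify. Factoring the $n$-dimensional transform as $\mathcal{F}_{(p',\tau')\to(x',t')}=\mathcal{F}_{p'\to x'}\,\mathcal{F}_{\tau'\to t'}$, setting $t=0$ in the phase $S$ and $t'=0$ in the outer transform, one sees that $\mathcal{F}_{\tau'\to t'}\{\,\cdot\,\}\big|_{t'=0}$ amounts precisely to integration in $\tau'$ (up to the normalizing constant, which is absorbed into the conventions of~\cite{MSS1, Shu1}). This produces exactly~\eqref{eq:kernelSledFIO}, with the $\tau'$-integral and $\mathcal{F}_{p'\to x'}$ understood in the oscillatory sense.

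The step needing the most care is the very first one: justifying that the distributional kernel $K_\Phi$ may be restricted to $\{t=0,\ t'=0\}$ and that this restriction commutes with the remaining integrations — equivalently, that the pullback of $K_\Phi$ under $X\times X\hookrightarrow M\times M$ exists. This rests on H\"ormander's criterion for pullback of distributions and is supplied by the wave front computation already carried out in Proposition~\ref{prop:WF'(Sled)subsetSled(graphg)} (see also~\cite{Hor1}). The remaining work — keeping track of the $(2\pi)$-factors and checking convergence of the resulting $\tau'$-integral via the homogeneity of $a_\mu$, its smoothing at the origin, and the linearity of $S$ in the physical variables — is routine.
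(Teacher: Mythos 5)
Your proposal is correct, but it takes the route that the paper deliberately declines: the remark right after the proposition states that the result ``can be quickly obtained from the direct substituting the formulas for $i^*$, $i_*$ into the composition $i^*\Phi i_*$'', which is exactly your computation $K_{i^!(\Phi)}(x,x')=K_\Phi(x,0,x',0)$ followed by the observation that $\mathcal{F}_{\tau'\to t'}\{\cdot\}\big|_{t'=0}$ is integration in $\tau'$. Your justification of the restriction step is also sound: what is needed is $\operatorname{WF}(K_\Phi)\cap N^*_0(X\times X)=\emptyset$, and this follows from \eqref{eq:WF'(FIO)subset(graphg)} together with conditions A), B) (equivalently, condition \eqref{eq:WF'(Sled)subsetSled(graphg)Condition}); strictly speaking you are invoking the intersection computation carried out \emph{inside} the proof of Proposition~\ref{prop:WF'(Sled)subsetSled(graphg)} rather than its conclusion, but that computation does supply exactly the needed emptiness, so there is no gap. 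The paper instead works in dual coordinates: it conjugates by the Fourier transform (Lemma~\ref{lemma:dualSchwartzKernelOperator}), writes $\widetilde{i^*}$ as integration in $\tau$ and $\widetilde{i_*}$ as extension by $1_\tau$, obtains the dual kernel formula \eqref{eq:kernelDualSledFIO} (Lemma~\ref{lemma:dualSledKernel}), checking well-definedness by exhibiting the explicit phase $-px-\tau t+S(x,t,p',\tau')$, whose required homogeneity comes from the linearity of $S$ in the physical variables, and only then returns to \eqref{eq:kernelSledFIO}. The trade-off: your argument is shorter and more elementary, while the paper's detour produces \eqref{eq:kernelDualSledFIO} as the real deliverable --- it is precisely the dual-coordinate form that feeds into Proposition~\ref{prop:dualOurSledIsIntegralOperator} and the subsequent Mellin reduction, so if you follow your route you would still have to redo essentially Lemma~\ref{lemma:dualSchwartzKernelOperator} later to get the operator in the $p$-variables.
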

\begin{remark}
It is easy to see that the desired result can be quickly obtained from
the direct substituting the formulas for $i^*$, $i_*$
(we mean~\eqref{defeq:defBOp} and~\eqref{defeq:defCobOp}) into the composition
$i^* \Phi i_*$, where $\Phi$ is defined by~\eqref{defeq:localFIO}.
However, we will choose another option and provide the calculation
in dual (with respect to the Fourier transform) coordinates.
This is reasonable, since we expect to get the structure of a Fourier--Mellin operator.
\end{remark}

\begin{proof}[Proof of Proposition~\ref{prop:sledFIO}]
It is easy to see that the operators $i^*$, $i_*$ in dual coordinates have the forms
\begin{equation*}
    \begin{split}
        & \widetilde{i^*} =
            \mathcal{F}_{x \rightarrow p} \, i^* \, \mathcal{F}^{-1}_{(p,\tau) \rightarrow (x,t)} \widetilde{u}(p,\tau) =
            \int \widetilde{u}(p,\tau) \, d{\tau}, \\
        & \widetilde{i_*} =
            \mathcal{F}_{(x,t) \rightarrow (p,\tau)} \, i_* \, \mathcal{F}^{-1}_{p \rightarrow x} \widetilde{u}(p) =
            \widetilde{u}(p) \otimes 1_\tau
    \end{split}
\end{equation*}
(here $1_\tau$ stands for the function, ``depending'' on $\tau$ variable, which is identically equal to $1$).
In other words, $\widetilde{i^*}$ acts by integrating with respect to the fiber variable
and $\widetilde{i_*}$ is an extension by a constant.

Having obtained $\widetilde{i^*}$ and $\widetilde{i_*}$, we need an
analogous result for $\Phi(g,a)$, that is, a method on how to convert it into dual coordinates.
We start from some general considerations.

Within the body of the next lemma we forget the submanifold structure
(assume for a while that $x$ are complete coordinates on $\mathbb{R}^n$).
\begin{lemma}\label{lemma:dualSchwartzKernelOperator}
Let $A$ be an integral operator in the spaces
$$
    A: S(\mathbb{R}^n) \longrightarrow S'(\mathbb{R}^n),
$$
where $S(\mathbb{R}^n)$ is Schwartz space of rapidly decreasing functions.
Let $K_A(x,x') \in S'(\mathbb{R}^n_x \times \mathbb{R}^n_{x'})$ be the Schwartz kernel of $A$.
Then the operator $\widetilde{A} = {\mathcal{F}_{x \rightarrow p}} \, A \, {\mathcal{F}^{-1}_{p \rightarrow x}}$
is an integral operator in the same spaces, and its Schwartz kernel $K_{\widetilde{A}}(p,p')$ has the form
\begin{equation*}
    K_{\widetilde{A}}(p,p')
        = \mathcal{F}_{x \rightarrow p} \mathcal{F}^{-1}_{x' \rightarrow p'} \, K_{A}(x,x').
\end{equation*}
\end{lemma}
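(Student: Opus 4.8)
The plan is to reduce the statement to pure bookkeeping via the Schwartz kernel theorem and the fact that the Fourier transform is formally self-transpose for the bilinear duality pairing; there is no genuine analytic content, since the only subtlety is tracking which variable each transform acts on.

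First I would note that $\widetilde A$ is again a continuous linear operator $S(\mathbb{R}^n)\to S'(\mathbb{R}^n)$: it is the composition $S(\mathbb{R}^n)\xrightarrow{\mathcal{F}^{-1}}S(\mathbb{R}^n)\xrightarrow{A}S'(\mathbb{R}^n)\xrightarrow{\mathcal{F}}S'(\mathbb{R}^n)$, each arrow continuous because $\mathcal{F}^{\pm1}$ are topological automorphisms of $S(\mathbb{R}^n)$ and of $S'(\mathbb{R}^n)$. Hence, by the Schwartz kernel theorem, $\widetilde A$ has a unique kernel $K_{\widetilde A}\in S'(\mathbb{R}^n_p\times\mathbb{R}^n_{p'})$, characterized by $\langle \widetilde A v, w\rangle=\langle K_{\widetilde A}, w\otimes v\rangle$ for all $v,w\in S(\mathbb{R}^n)$, where $\langle\cdot,\cdot\rangle$ is the bilinear pairing and $(w\otimes v)(p,p')=w(p)\,v(p')$; it therefore suffices to identify this pairing.

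Next I would compute, for $v,w\in S(\mathbb{R}^n)$, using the identity $\langle\mathcal{F}f,g\rangle=\langle f,\mathcal{F}g\rangle$ (and its analogue for $\mathcal{F}^{-1}$), which also holds for partial Fourier transforms:
\[
\langle \widetilde A v, w\rangle
=\langle \mathcal{F}\big(A\,\mathcal{F}^{-1}v\big),w\rangle
=\langle A\,\mathcal{F}^{-1}v,\mathcal{F}w\rangle
=\big\langle K_A,\,(\mathcal{F}w)\otimes(\mathcal{F}^{-1}v)\big\rangle .
\]
Then I would rewrite the test function as $(\mathcal{F}w)\otimes(\mathcal{F}^{-1}v)=\mathcal{F}^{-1}_{p'\to x'}\mathcal{F}_{p\to x}[\,w\otimes v\,]$ and transpose both partial transforms back onto $K_A$:
\[
\langle \widetilde A v, w\rangle
=\big\langle K_A,\ \mathcal{F}^{-1}_{p'\to x'}\mathcal{F}_{p\to x}[\,w\otimes v\,]\big\rangle
=\big\langle \mathcal{F}_{x\to p}\mathcal{F}^{-1}_{x'\to p'}K_A,\ w\otimes v\big\rangle ,
\]
where the last equality uses that the partial distributional Fourier transforms are defined precisely by this transposition (and that $\mathcal{F}_{x\to p}$ and $\mathcal{F}^{-1}_{x'\to p'}$ commute, acting on disjoint variables). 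Comparing with the characterization of $K_{\widetilde A}$ and invoking its uniqueness yields $K_{\widetilde A}(p,p')=\mathcal{F}_{x\to p}\mathcal{F}^{-1}_{x'\to p'}K_A(x,x')$, as claimed.

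I do not expect a real obstacle; the one point requiring care is consistency of conventions — distinguishing the bilinear pairing from the Hermitian one, and keeping straight which variable ($x$ versus $x'$, direct versus inverse transform) each Fourier operator acts on — so that the self-transpose identities land in the correct slot. For readers who prefer a formal verification, I would append the one-line computation: substituting $Au(x)=\int K_A(x,x')u(x')\,dx'$ and $(\mathcal{F}^{-1}_{p\to x}v)(x')=(2\pi)^{-n/2}\int e^{ix'p}v(p)\,dp$ into $\widetilde A v=\mathcal{F}_{x\to p}A\,\mathcal{F}^{-1}_{p\to x}v$ and interchanging the oscillatory integrals reads off the kernel $(2\pi)^{-n}\iint e^{-ixp+ix'p'}K_A(x,x')\,dx\,dx'$; the distributional argument above is exactly what legitimizes that interchange.
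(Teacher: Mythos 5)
Your proof is correct and is essentially the paper's own argument: both pair $\widetilde{A}v$ against a test function, transpose the Fourier transforms onto the test functions, express the pairing through $K_A$, rewrite the resulting test function as partial Fourier transforms of the tensor product, and transpose back onto $K_A$. The only additions (explicit appeal to the Schwartz kernel theorem for uniqueness and the formal oscillatory-integral check) are harmless elaborations of the same computation.
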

\begin{proof}
Given a test function $\widetilde{\varphi}(p) \in S(\mathbb{R}^n)$, we have
\begin{equation*}
    \begin{split}
    \langle \widetilde{A} \widetilde{u}(p), \widetilde{\varphi}(p) \rangle &
        = \langle \mathcal{F}_{x \rightarrow p} \, A \, \mathcal{F}^{-1}_{p \rightarrow x} \widetilde{u}(p), \, \widetilde{\varphi}(p) \rangle = \\ &
        = \langle A \, \mathcal{F}^{-1}_{p \rightarrow x} \, \widetilde{u}(p), \, \mathcal{F}_{p \rightarrow x} \, \widetilde{\varphi}(p) \rangle = \\ &
        = \langle K_A(x,x'), \, {\mathcal{F}^{-1}_{p' \rightarrow x'}} \widetilde{u}(p') \otimes \mathcal{F}_{p \rightarrow x} \widetilde{\varphi}(p) \rangle = \\ &
        = \langle K_A(x,x'), \, {\mathcal{F}^{-1}_{p' \rightarrow x'}} \mathcal{F}_{p \rightarrow x} \,
            \left[ \widetilde{u}(p') \otimes  \widetilde{\varphi}(p) \right] \rangle = \\ &
        = \langle \mathcal{F}_{x \rightarrow p} \mathcal{F}^{-1}_{x' \rightarrow p'} \, K_A(x,x'), \, \widetilde{u}(p') \otimes \widetilde{\varphi}(p) \rangle.
    \end{split}
\end{equation*}
This completes the proof.
\end{proof}

Now we are back to coordinates $(x,t)$.
Let $A$ be an integral operator on $\mathbb{R}^n_{x,t}$ and let $K_A(x,t,x',t')$ be its Schwartz kernel.
We are interested in $\widetilde{i^!(A)}$ by which we denote the trace of $A$ in dual coordinates.

First, at least formally, we have
\begin{equation*}
    \begin{split}
        \widetilde{i^!(A)}\widetilde{u}(p) &
        = \mathcal{F}_{x \rightarrow p} \, i^* A \, i_* \, \mathcal{F}^{-1}_{p \rightarrow x} \, \widetilde{u}(p) = \\ &
        = \mathcal{F}_{x \rightarrow p} \, i^* A \, \mathcal{F}^{-1}_{(p,\tau) \rightarrow (x,t)} \,
            \bigl[ \widetilde{u}(p) \otimes 1_\tau\bigr] = \\ &
        = \mathcal{F}_{x \rightarrow p} \, i^* \mathcal{F}^{-1}_{(p,\tau) \rightarrow (x,t)} \, \widetilde{A} \,
            \bigl[ \widetilde{u}(p) \otimes 1_\tau\bigr] = \\ &
        = \mathcal{F}_{x \rightarrow p} \, i^* \mathcal{F}^{-1}_{(p,\tau) \rightarrow (x,t)} \,
            \iint K_{\widetilde{A}}(p,\tau,p',\tau') \, \widetilde{u}(p') \, d{p'}\, d{\tau'} =  \\ &
        = \iiint K_{\widetilde{A}}(p,\tau,p',\tau') \, \widetilde{u}(p') \, d{p'} \, d{\tau}\, d{\tau'}.
    \end{split}
\end{equation*}
Thus, the kernel of $\widetilde{i^!(A)}$ can be expressed via the kernel of $A$
by integrating the latter with respect to conormal variables. Namely,
\begin{equation}\label{eq:kernelDualSled}
    K_{\widetilde{i^!(A)}}(p,p') = \iint K_{\widetilde{A}}(p,\tau,p',\tau') \, d{\tau}\, d{\tau'}.
\end{equation}

We claim that integral~\eqref{eq:kernelDualSled} makes sense and is well-defined
if one considers $\Phi$ defined by~\eqref{defeq:localFIO} instead of an arbitrary operator $A$.
In other words, the following result takes place
\begin{lemma}\label{lemma:dualSledKernel}
In dual coordinates $i^!(\Phi)$ becomes an integral operator
with Schwartz kernel having the form
\begin{equation}\label{eq:kernelDualSledFIO}
    K_{\widetilde{i^!(\Phi)}}(p,p') =
        \int \mathcal{F}_{x \rightarrow p} \, \biggl\{ e^{i S(x,0,p',\tau')} a_\mu(p',\tau')\biggr\} \, d{\tau'}.
\end{equation}
\end{lemma}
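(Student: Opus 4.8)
The plan is to combine Lemma~\ref{lemma:dualSchwartzKernelOperator} with formula~\eqref{eq:kernelDualSled}, using the explicit form of the kernel $K_\Phi$ given just before the statement. First I would apply Lemma~\ref{lemma:dualSchwartzKernelOperator} componentwise in the $(x,t)$ and $(x',t')$ variables to the operator $\Phi$ of~\eqref{defeq:localFIO}. Since the kernel of $\Phi$ is $K_\Phi(x,t,x',t') = \mathcal{F}_{(p',\tau') \rightarrow (x',t')} \{ e^{i S(x,t,p',\tau')} a_\mu(p',\tau')\}$ and it does not depend on the physical variables $x,t$ except through the linear phase, applying $\mathcal{F}_{(x,t)\rightarrow(p,\tau)}$ to the first pair of variables turns $e^{iS(x,t,p',\tau')}$ into a delta-type distribution in $(p,\tau)$ supported on the graph of $g$ (this is where linearity of $S$ in $x,t$ is essential), while the inverse Fourier transform $\mathcal{F}^{-1}_{(x',t')\rightarrow(p',\tau')}$ simply undoes the transform already present in $K_\Phi$, returning $e^{iS} a_\mu$ in the momentum variables. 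Thus $K_{\widetilde{\Phi}}(p,\tau,p',\tau')$ is an honest (oscillatory) function of the momentum variables, not merely a distribution, and one reads off its dependence on $(p,\tau,p',\tau')$.

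Next I would substitute this expression for $K_{\widetilde{\Phi}}$ into~\eqref{eq:kernelDualSled} and carry out the two conormal integrations $\int\!\!\int \,d\tau\,d\tau'$. Because of the delta-type factor in $\tau$ coming from the first Fourier transform — concretely, $\partial S/\partial t$ evaluated at $t=0$ being the value of $\tau$ prescribed by the canonical transformation — the $d\tau$ integral collapses, setting $t=0$ inside the surviving phase and reproducing the restriction $e^{iS(x,0,p',\tau')}$. One then recognizes what remains: $\mathcal{F}_{x\rightarrow p}$ applied to $e^{iS(x,0,p',\tau')} a_\mu(p',\tau')$, integrated against $d\tau'$, which is exactly~\eqref{eq:kernelDualSledFIO}. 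The chain of formal identities displayed immediately before the statement already shows that $K_{\widetilde{i^!(\Phi)}}$ is given by~\eqref{eq:kernelDualSled}; the content of the lemma is that for this particular $\Phi$ the double conormal integral converges (as an oscillatory integral) and yields the clean closed form.

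The main obstacle is precisely the convergence and well-definedness of the conormal integrations in~\eqref{eq:kernelDualSled}: a priori $\int d\tau\,d\tau'$ of a general Fourier integral kernel need not make sense, and the formal computation preceding the statement is only valid modulo this justification. The key input here is the localization already established in Lemma~\ref{lemma:localizedInAPoint}, equivalently the hypothesis~\eqref{eq:WF'(Sled)subsetSled(graphg)Condition} (conditions A), B)): it guarantees that the integrand has no critical points in the conormal directions except over the fiber at $x_0$, so that repeated integration by parts in $\tau$ and $\tau'$ produces rapid decay and the oscillatory integral is absolutely convergent away from $x=0$; at $x=0$ the homogeneity of $S$ and $a_\mu$ controls the singularity. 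So the plan is: (i) compute $K_{\widetilde{\Phi}}$ via Lemma~\ref{lemma:dualSchwartzKernelOperator} and linearity of $S$ in $x,t$; (ii) plug into~\eqref{eq:kernelDualSled}, perform the $d\tau$ integration using the delta-factor to set $t=0$; (iii) invoke the wave-front/homogeneity estimates (following from conditions A), B)) to justify that the remaining $d\tau'$-dependent oscillatory integral is well-defined, arriving at~\eqref{eq:kernelDualSledFIO}.
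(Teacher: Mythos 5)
Your proposal follows essentially the same route as the paper's proof: compute $K_{\widetilde{\Phi}}$ via Lemma~\ref{lemma:dualSchwartzKernelOperator}, substitute into~\eqref{eq:kernelDualSled}, and observe that the composition of $\mathcal{F}_{t \rightarrow \tau}$ with the subsequent $\tau$-integration amounts to setting $t=0$. The only divergence is in justifying the conormal integrations: the paper does not invoke Lemma~\ref{lemma:localizedInAPoint} or conditions A), B) there, but simply checks that $-px-\tau t+S(x,t,p',\tau')$ has the properties of an admissible oscillatory-integral phase, with the needed homogeneity coming from the linearity of $S$ in the physical variables --- an ingredient you also use, so your (somewhat vaguer) wave-front argument is an acceptable substitute rather than a gap.
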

\begin{proof}
By Lemma~\ref{lemma:dualSchwartzKernelOperator}, we have
\begin{equation*}
    \begin{split}
        K_{\widetilde{\Phi}}(p,\tau,p',\tau') &
            = \mathcal{F}_{(x,t) \rightarrow (p,\tau)} \mathcal{F}^{-1}_{(x',t') \rightarrow (p',\tau')} K_\Phi(x,t,x',t') = \\ &
            = \mathcal{F}_{(x,t) \rightarrow (p,\tau)} \, \biggl\{ e^{i S(x,t,p',\tau')} a_\mu(p',\tau') \biggr\}.
    \end{split}
\end{equation*}
After substituting this expression into~\eqref{eq:kernelDualSled} we get
$$
    K_{\widetilde{i^!(\Phi)}}(p,p') =\iint \mathcal{F}_{(x,t) \rightarrow (p,\tau)} \,
    \biggl\{ e^{i S(x,t,p',\tau')} a_\mu(p',\tau') \biggr\} \, d{\tau}\, d{\tau'}.
$$
The obtained formula is well-defined in the sense of oscillatory integrals.
Indeed, the corresponding phase function has the form
$$
    (p,p'; x,t,\tau,\tau') \longmapsto -px -\tau t + S(x,t,p',\tau')
$$
and possess all the required properties (see Section~\ref{subsec:operatorFIO});
we note that the homogeneity of degree $1$ with respect to variables $(x,t,\tau,\tau')$
is provided due to the condition that $S$ is linear in physical variables.

It remains to note that the composition of the Fourier transform
$\mathcal{F}_{t \rightarrow \tau}$ and subsequent integration with respect to $\tau$
can be replaced with the substitution $t = 0$. This completes the proof.
\end{proof}

Now, we can derive formula~\eqref{eq:kernelSledFIO} from Lemma~\ref{lemma:dualSledKernel}
and Lemma~\ref{lemma:dualSchwartzKernelOperator}. Thus, Proposition~\ref{prop:sledFIO} is proved.
\end{proof}

\subsection{Calculations in the dual space}
We continue considering $i^!(\Phi)$ locally in dual coordinates.
First we pay attention to the generating function $S$.

\begin{lemma}\label{lemma:S(x,0,p',tau')equalsxh(p',tau')}
Let $S$ be the generating function of $\operatorname{graph} g$ viewed as a Lagrangian submanifold.
Then in a neighbourhood of the fiber over $\{x_0\} \times \{x_0\}$
the restriction $S(x,0,p',\tau')$ of $S(x,t,p',\tau')$ has the following form
\begin{equation}\label{eq:linearSonX}
    S(x,0,p',\tau') = x \, h(p',\tau'),
\end{equation}
where $h$ is the same as in condition B) (see~\eqref{eq:h(p',tau')determinatesDiffeomorphisms}).
\end{lemma}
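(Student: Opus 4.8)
The plan is to combine the standing hypothesis that $S$ is linear in the physical variables with the generating-function relations for $\operatorname{graph} g$ and with conditions A), B).

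First I would record what linearity gives. Since $S(x,t,p',\tau')$ is linear in $(x,t)$, it has the form
\begin{equation*}
    S(x,t,p',\tau') = x\, A(p',\tau') + t\, B(p',\tau'),
\end{equation*}
where $A$ and $B$ are, by homogeneity of $S$, homogeneous of degree $1$ in $(p',\tau')$ (a possible additive term depending on $(p',\tau')$ alone would be homogeneous of degree $1$ and, by condition A) below, $(p',\tau')$-independent, hence zero). It then suffices to prove $A(p',\tau') = h(p',\tau')$, for setting $t = 0$ immediately yields~\eqref{eq:linearSonX}.

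Next I would recall the generating-function description of $\operatorname{graph} g$ in the coordinates $(x,t,p',\tau')$. Reading off the phase $S(x,t,p',\tau') - p'x' - \tau't'$ from the oscillatory-integral representation~\eqref{defeq:FIOKernel} (with the present choice of index sets), and keeping the twisted symplectic form $\pi_1^*\omega - \pi_2^*\omega$ in mind, a point of $\operatorname{graph} g$ with $(x,t,p,\tau) = g(x',t',p',\tau')$ satisfies
\begin{equation*}
    p = \frac{\partial S}{\partial x}, \qquad \tau = \frac{\partial S}{\partial t}, \qquad x' = \frac{\partial S}{\partial p'}, \qquad t' = \frac{\partial S}{\partial \tau'}.
\end{equation*}
Substituting the linear form of $S$ gives $p = A(p',\tau')$, $\tau = B(p',\tau')$, $x' = x\,\partial_{p'}A + t\,\partial_{p'}B$ and $t' = x\,\partial_{\tau'}A + t\,\partial_{\tau'}B$. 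Hence the locus $\{(x,t) = (0,0)\}$ inside $\operatorname{graph} g$ consists exactly of the points $\bigl((0,0,A(p',\tau'),B(p',\tau')),\,(0,0,p',\tau')\bigr)$; by condition A) this locus is precisely $\operatorname{graph} g$ restricted to $(T^*_0M)_{x_0}\times(T^*_0M)_{x_0}$, i.e.\ the graph of $g|_{(T^*_0M)_{x_0}}$.

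Finally I would compare with condition B), which asserts that on this fiber $g$ sends $(0,0,p',\tau')$ to $(0,0,h(p',\tau'),\tau)$. Matching the momentum components forces $A(p',\tau') = h(p',\tau')$ (while $B(p',\tau')$ is exactly the component written $\tau$ in B), so the two descriptions are consistent). Therefore $S(x,0,p',\tau') = x\,A(p',\tau') = x\,h(p',\tau')$, as claimed. The only genuinely delicate points are the sign bookkeeping in passing from the phase in~\eqref{defeq:FIOKernel} to the relations $p = \partial_x S$ and $x' = \partial_{p'}S$, and the verification that $S$ carries no term depending on $(p',\tau')$ alone; both are handled as indicated, and the remainder is routine.
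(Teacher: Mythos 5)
Your proposal is correct and follows essentially the same route as the paper: you use the generating-function relations $p = \partial S/\partial x$, $x' = \partial S/\partial p'$, etc., linearity of $S$ in the physical variables, condition A) (via vanishing of $\partial_{p'}S$, $\partial_{\tau'}S$ at $(x,t)=(0,0)$) together with degree-$1$ homogeneity to kill the term independent of $(x,t)$, and condition B) to identify $\partial S/\partial x(0,p',\tau')$ with $h$. The only difference is cosmetic: the paper writes $S = S_0(t,p',\tau') + x\,\partial_x S(t,p',\tau')$ and shows $S_0(0,p',\tau')=0$, whereas you expand fully as $xA + tB$ and dismiss the residual term in a parenthetical.
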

\begin{proof}
The fact that $S$ is the generating function
means that $\operatorname{graph} g$ viewed as a submanifold of $T^*_0(M \times M)$
is determined by the following equations
\begin{equation}\label{eqs:lagrangianCoordsFromS}
    x' = \frac{\partial S}{\partial p'}, \quad t' = \frac{\partial S}{\partial \tau'}, \quad
    p = \frac{\partial S}{\partial x}, \quad \, \tau = \frac{\partial S}{\partial t}.
\end{equation}

On the other hand, by virtue of the condition that $S$ is linear in physical variables we have
\begin{equation}\label{eq:linearS}
    S(x,t,p',\tau') = S_0(t,p',\tau') + x \frac{\partial S}{\partial x}(t,p',\tau').
\end{equation}

Note that $S_0(t,p',\tau')$ is necessarily zero.
Indeed, because of condition A) and by virtue of~\eqref{eqs:lagrangianCoordsFromS}, we have
$$
    \frac{\partial S}{\partial p'}(0,0,p',\tau') = 0, \quad
    \frac{\partial S}{\partial \tau'}(0,0,p',\tau') = 0.
$$
From~\eqref{eq:linearS} we get
$$
    \frac{\partial S_0}{\partial p}(0,p',\tau') = 0, \quad
    \frac{\partial S_0}{\partial \tau}(0,p',\tau') = 0,
$$
and it follows that $S_0(0,p',\tau')$ is a constant;
since $S(x,t,p',\tau')$ is homogeneous in momentum variables,
this constant is zero.

Now, by setting
\begin{equation}\label{eq:defhmap}
    h(p',\tau') = \frac{\partial S}{\partial x}(0,p',\tau'),
\end{equation}
we get~\eqref{eq:linearSonX}.
From~\eqref{eqs:lagrangianCoordsFromS} and~\eqref{eq:defhmap} it follows
that this new $h$ is nothing but the $p$-component of $g$;
so it is essentially the same $h$ from condition B).
The lemma is proved.
\end{proof}

Now, we are ready to study $\widetilde{i^!(\Phi)}$, that is, the operator $i^!(\Phi)$ transferred to dual coordinates.
\begin{proposition}\label{prop:dualOurSledIsIntegralOperator}
The operator $\widetilde{i^!(\Phi)}$ is of the form
\begin{equation}\label{eq:dualOurSledAsIntegralOperatorSecond}
    \widetilde{u}(p) \longmapsto \int \psi(p) \, b^0(p,p') \, \psi'(p') \, \widetilde{u}(p') \, d{p'},
\end{equation}
where $b^0(p,p')$ is a homogeneous function of degree $m$,
$\psi(p)$ are $\psi'(p')$ are cut-off functions which equal to $0$ in a neighbourhood of zero and $1$ at infinity.
The operator~\eqref{eq:dualOurSledAsIntegralOperatorSecond} acts continuously in the spaces
$$
    \widetilde{H}^s(\mathbb{R}^{n/2}_p) \longrightarrow \widetilde{H}^{s-m-n/2}(\mathbb{R}^{n/2}_p),
$$
(here $\widetilde{H}^s = \mathcal{F}_{x \rightarrow p}H^s$ is dual to the Sobolev space $H^s$ with respect to the Fourier transform),
and is independent of the choice
of functions $\psi$ and $\psi'$ up to operators which are compact in physical coordinates.
\end{proposition}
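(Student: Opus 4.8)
The plan is to start from the closed expression for the Schwartz kernel of $\widetilde{i^!(\Phi)}$ provided by Lemma~\ref{lemma:dualSledKernel} and to compute it explicitly, using the two structural features of our setting: the generating function is linear in the physical variables, so that $S(x,0,p',\tau') = x\,h(p',\tau')$ by Lemma~\ref{lemma:S(x,0,p',tau')equalsxh(p',tau')}, and $h$ enjoys the diffeomorphism property of condition~B). Substituting $S(x,0,p',\tau')=x\,h(p',\tau')$ into~\eqref{eq:kernelDualSledFIO} and computing the (now elementary) Fourier transform of a linear exponential in the $(n/2)$-dimensional variable $x$, namely $\mathcal{F}_{x\to p}\{e^{i x\,h(p',\tau')}\} = c\,\delta\bigl(p - h(p',\tau')\bigr)$ with $c=(2\pi)^{n/4}$, one obtains
\[
  K_{\widetilde{i^!(\Phi)}}(p,p') = c \int \delta\bigl(p - h(p',\tau')\bigr)\, a_\mu(p',\tau')\, d{\tau'} .
\]

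The next step is to carry out the $\tau'$-integration. For $p'\neq 0$ condition~B) asserts that $\tau'\mapsto h(p',\tau')$ is a diffeomorphism of $\mathbb{R}^{n/2}$; writing $h_{p'}^{-1}$ for its inverse (with $p'$ frozen) and pushing the delta forward through this change of variables gives
\[
  K_{\widetilde{i^!(\Phi)}}(p,p') = c\, a_\mu\bigl(p', h_{p'}^{-1}(p)\bigr)\,
      \Bigl| \det \tfrac{\partial h_{p'}^{-1}}{\partial p}(p) \Bigr| =: b^0(p,p') .
\]
Homogeneity of $b^0$ then follows by bookkeeping: $h$ is homogeneous of degree $1$, hence $h_{p'}^{-1}$ is homogeneous of degree $1$ jointly in $(p,p')$ and its $p$-differential is homogeneous of degree $0$; since $a$ is homogeneous of degree $m$ and $\mu$ of degree $0$, $a_\mu$ is homogeneous of degree $m$, and therefore $b^0$ is homogeneous of degree $m$.

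The delicate point, and the one I expect to be the main obstacle, is the behaviour near the zero section: the diffeomorphism in condition~B) degenerates on $\{p'=0\}$, the above change of variables is licit only away from the origin, and the amplitude $a$ is merely \emph{smoothed}, not homogeneous, near zero. I would handle this by inserting cut-off functions $\psi(p)$ and $\psi'(p')$ vanishing in a neighbourhood of zero and equal to $1$ at infinity, so that $\widetilde{i^!(\Phi)}$ acquires the form~\eqref{eq:dualOurSledAsIntegralOperatorSecond}; the difference between two admissible choices of $\psi,\psi'$ is an operator whose kernel is supported where $p$ or $p'$ stays bounded. Here the standing assumption $s<0$, $s-m-n/2>0$ is used: it forces $m<-n/2$, so that the $\tau'$-integral converges absolutely and the error kernel is continuous and rapidly decreasing in the unbounded momentum variable; conjugating back to the $x$-coordinates with the localizing cut-off then renders this error operator compact — this is the meaning of "compact in physical coordinates".

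It remains to establish continuity in the spaces $\widetilde{H}^s(\mathbb{R}^{n/2}_p) \to \widetilde{H}^{s-m-n/2}(\mathbb{R}^{n/2}_p)$. The operator~\eqref{eq:dualOurSledAsIntegralOperatorSecond} has a kernel which is smooth away from the origin, homogeneous of degree $m$, and cut away from $\{p=0\}\cup\{p'=0\}$; such an operator is bounded $\widetilde{H}^s\to\widetilde{H}^{s-m-n/2}$, which I would prove by a dyadic decomposition in $|p|$ and $|p'|$, estimating each piece by means of the scaling $b^0(\lambda p,\lambda p')=\lambda^{m}b^0(p,p')$ and summing. Equivalently, this is precisely the Fourier--Mellin continuity established in~\cite{Losch1} — which also explains why the Mellin transform in the radial variable is the natural tool for the remaining analysis. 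Combining these steps yields Proposition~\ref{prop:dualOurSledIsIntegralOperator}.
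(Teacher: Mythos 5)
Your Step 1 is essentially the paper's own argument: you substitute $S(x,0,p',\tau')=x\,h(p',\tau')$ into \eqref{eq:kernelDualSledFIO}, obtain a delta function from the linear phase, and resolve it using the fibrewise invertibility of $h$ guaranteed by condition B). (The paper performs the change of variables $\tau'\mapsto\eta=h(p',\tau')$ before invoking the delta, but the resulting kernel $b(p,p')=\lvert J(p',p)\rvert\,a_\mu(p',h^{-1}(p',p))$ and the homogeneity bookkeeping are the same as yours.) Your identification of the delicate point --- the degeneration at the zero section and the fact that $a$ is only smoothed, not homogeneous, near zero --- is also correct, and the insertion of the cut-offs $\psi,\psi'$ matches the paper.

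The gap is in how you dispose of the remainder. You assert that the error kernel (the difference between $b$ and $\psi b^0\psi'$, or between two choices of cut-offs) is ``continuous and rapidly decreasing in the unbounded momentum variable''; it is not: on the region where, say, $\lvert p\rvert$ stays bounded it decays only like $\lvert p'\rvert^{m}$, polynomially, because $b^0$ is homogeneous of degree $m$. More importantly, compactness ``in physical coordinates'' is not a soft consequence of the support and decay of the error kernel; one has to prove a quantitative bound between the weighted spaces, namely that the remainder maps $\widetilde H^{s}\to\widetilde H^{s-m-n/2+\varepsilon}$ with a strictly positive gain $\varepsilon$, after which Rellich on the compact $X$ yields compactness. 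This is precisely the content of the paper's Lemma~\ref{lemma:differenceNormEstimate}: a Cauchy--Schwarz (Hilbert--Schmidt type) estimate of the weighted kernel $(1+\lvert p\rvert^2+\lvert p'\rvert^2)^{s'-s}\lvert b^0\rvert^2\lvert\chi-\psi\psi'\rvert^2$, split over the regions $\{\lvert p\rvert,\lvert p'\rvert\ge C/2\}$, $\{\lvert p'\rvert\le C\}$, $\{\lvert p\rvert\le C\}$, where the standing hypotheses $s<0$, $s-m-n/2>0$ enter quantitatively and force the restriction $0\le\varepsilon<n/4$. Your sketch states the conclusion without this estimate, and with the claimed rapid decay it would not go through as written (polynomial decay of order $m<-n/2$ does suffice, but only once the weighted estimate is actually carried out). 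Your proposed dyadic-decomposition (or Mellin) proof of the continuity of the main term \eqref{eq:dualOurSledAsIntegralOperatorSecond} is legitimate and is morally what the following section of the paper does; the paper itself obtains this continuity implicitly, since $i^!(\Phi)$ is continuous in these spaces by construction and the remainder is bounded by Lemma~\ref{lemma:differenceNormEstimate} with $\varepsilon=0$.
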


\begin{proof}
\textbf{Step 1.}
\textit{Reduction to an integral operator with a homogeneous kernel.}
Consider the kernel $K_{\widetilde{i^!(\Phi)}}$ of $\widetilde{i^!(\Phi)}$.
Using~\eqref{eq:linearSonX}, we get from~\eqref{eq:kernelDualSledFIO}
\begin{equation}\label{eq:kernelDualSledOurFIO}
    K_{\widetilde{i^!(\Phi)}}(p,p') =
        \int \mathcal{F}_{x \rightarrow p}
            \left\{ e^{i x h(p',\tau')} a_\mu(p',\tau')\right\}
        \, d{\tau'}.
\end{equation}

We want to make the change of variables $\tau' \mapsto \eta$ by substituting $\eta = h(p',\tau')$.
To justify this action, let us note that the mapping
\begin{equation}\label{eq:mapForSubstitution}
    \mathbb{R}^{n}\setminus\{0\} \longrightarrow \mathbb{R}^{n}\setminus\{0\}, \quad (p',\tau') \longmapsto (p',h(p',\tau'))
\end{equation}
is well-defined and is a homogeneous diffeomorphism of degree $1$. Indeed, this easily follows from condition B).

The inverse mapping can be written in the form
\begin{equation}\label{eq:inverseMapForSubstitution}
    \mathbb{R}^{n}\setminus\{0\} \longrightarrow \mathbb{R}^{n}\setminus\{0\}, \quad (p',\eta) \longmapsto (p',h^{-1}(p',\eta)),
\end{equation}
where $h^{-1}$ is the inverse mapping of $h$ for fixed $p'$.
Since~\eqref{eq:inverseMapForSubstitution} is a diffeomorphism,
we see that $h^{-1}(p',\eta)$ is a smooth mapping outside zero,
and so is its Jacobian which we denote by $J(p',\eta)$.

Now, after the change of variables $\tau' \mapsto \eta = h(p',\tau')$ in~\eqref{eq:kernelDualSledOurFIO},
we get
\begin{equation*}
    K_{\widetilde{i^!(\Phi)}}(p,p') =
        \int
            \mathcal{F}_{x \rightarrow p} \left\{e^{i x \eta}\right\} \,
            {\left\lvert J(p',\eta) \right\rvert} \,
            a_\mu(p',h^{-1}(p',\eta))
        \, d{\eta}.
\end{equation*}
Note that $\mathcal{F}_{x \rightarrow p} \left\{e^{i x \eta}\right\}$ yields a Dirac delta-function
depending on $p$ variable,
and further integration with respect to $p$ is simply application of this delta-function.
So we have
\begin{equation*}
    \begin{split}
        K_{\widetilde{i^!(\Phi)}}(p,p') & =
        \int
            \delta(p-\eta) \,
            {\left\lvert J(p',\eta) \right\rvert} \,
            a_\mu(p',h^{-1}(p',\eta))
        \, d{\eta} = \\ & =
        {\left\lvert J(p',p) \right\rvert} \, a_\mu(p',h^{-1}(p',p)).
    \end{split}
\end{equation*}

Now, by setting
\begin{equation*}
    b(p,p') = {\left\lvert J(p',p) \right\rvert} \, a_\mu(p',h^{-1}(p',p)),
\end{equation*}
we obtain that $\widetilde{i^!(\Phi)}$ is an operator of the form
\begin{equation}\label{eq:dualOurSledAsIntegralOperatorFirst}
    \widetilde{i^!(\Phi)} \widetilde{u}(p) = \int b(p,p') \widetilde{u}(p') \, d{p'}.
\end{equation}

We claim that the function $b(p,p')$ is homogeneous of degree $m$ and smooth outside zero.
Indeed, the homogeneity is by the construction
and the smoothness follows from the smoothness of~\eqref{eq:inverseMapForSubstitution}.

\textbf{Step 2.} \textit{Smoothing the kernel by introducing cut-off functions}. 
Let $b^0(p,p')$ be a homogeneous function of degree $m$ coinciding with $b(p,p')$ at infinity.
We want to show that the kernel $b(p,p')$ of operator~\eqref{eq:dualOurSledAsIntegralOperatorFirst}
can be replaced by a function of the form $\psi(p) b^0(p,p') \psi'(p')$, where
$\psi$ and $\psi'$ are smooth functions that cut out the origin,
up to operators
which are compact in physical coordinates.
Let us estimate the norm of an operator with the kernel $b - \psi b^0 \psi'$.

Let $\psi$ and $\psi'$ be smooth functions which equal $0$ in a neighbourhood of zero and
equal to $1$ at infinity. Consider an operator ${R}$ acting as follows
\begin{equation*}
    {R} \widetilde{u}(p) = \int \left[b(p,p') - \psi(p) \, b^0(p,p') \, \psi'(p')\right] \widetilde{u}(p') \, d{p'}.
\end{equation*}

\begin{lemma}\label{lemma:differenceNormEstimate}
Let $s <0$ and $s-m-n/2 > 0$.
Then ${R}$ acts continuously in the spaces
\begin{equation}\label{eq:operator{R}}
    {R}: \widetilde{H}^s(\mathbb{R}^{n/2}) \longrightarrow \widetilde{H}^{s-m-n/2+\varepsilon}(\mathbb{R}^{n/2})
\end{equation}
for any $\varepsilon$ satisfying $0 \leq \varepsilon < n/4$.
\end{lemma}
\begin{proof}

\textbf{Step 1.} Note that we can smooth the kernel of operator~\eqref{eq:dualOurSledAsIntegralOperatorFirst}
by multiplying it by a cut-off function of two variables. More preciously, consider an operator
\begin{equation}\label{eq:dualOurSledAsIntegralOperatorFirstCorrected}
    \widetilde{u}(p) \longmapsto \int \chi(p,p') \, b^0(p,p') \, \widetilde{u}(p') \, d{p'},
\end{equation}
where $\chi(p,p')$ is a smooth function which equals $0$ in a neighbourhood of zero and
equals $1$ at infinity. The difference of operators~\eqref{eq:dualOurSledAsIntegralOperatorFirst}
and~\eqref{eq:dualOurSledAsIntegralOperatorFirstCorrected}
is an operator with the kernel $\chi b - b^0 = (1-\chi) b^0$ which is a function vanishing at infinity.
Such an operator is smoothing in physical coordinates,
so we can consider~\eqref{eq:dualOurSledAsIntegralOperatorFirstCorrected}
instead of~\eqref{eq:dualOurSledAsIntegralOperatorFirst}.

Using this argument,
let us think that ${R}$ is actually induced by kernel $(\chi - \psi \psi')b^0$.

\textbf{Step 2.}
Fix cut-off functions $\chi$ and $\psi$, $\psi'$ as follows:
let $\chi(p,p') = 1$ if ${\lvert p \rvert}, {\lvert p' \rvert} \geq C/2$,
let $\psi(p) = 1$ if ${\lvert p \rvert} \geq C$,
and let $\psi(p') = 1$ if ${\lvert p' \rvert} \geq C$; here $C$ is some positive constant.

We are going to estimate the following expression
\begin{equation*}
    \left\| {R}\widetilde{u} \right\|^2_{\widetilde{H}^{s'}} =
        \int
        \left( 1 + {\lvert p \rvert}^2 \right)^{s'}
        {\left\lvert {\,
            \int b^0(p,p')\left[\chi(p,p')-\psi(p) \, \psi'(p')\right] \, \widetilde{u}(p') \, d{p'}
        \,} \right\rvert}^2
        \, d{p}
\end{equation*}
for some $s' > 0$ (which we specify later).

Applying Cauchy--Schwarz inequality to the inner integral
and using the assumption that $s < 0$ and $s'>0$, we obtain
\begin{equation*}
    \begin{split}
        \left\| {R}\widetilde{u} \right\|^2_{\widetilde{H}^{s'}}
            & \leq \iint \left( 1 + {\lvert p \rvert}^2 + {\lvert p' \rvert}^2 \right)^{s'-s}
                {\left\lvert {b^0(p,p') \left[\chi(p,p')-\psi(p) \, \psi'(p')\right]} \right\rvert}^2 \,
                \left\| \widetilde{u} \right\|^2_{\widetilde{H}^s}
                \, d{p'} \, d{p} = \\
            & = \left\| \widetilde{u} \right\|^2_{\widetilde{H}^s}
                \iint \left( 1 + {\lvert p \rvert}^2 + {\lvert p' \rvert}^2 \right)^{s'-s}
                {\bigl\lvert b^0(p,p') \bigr\rvert}^2{\bigl\lvert \chi(p,p')-\psi(p) \, \psi'(p') \bigr\rvert}^2
                \, d{p'} \, d{p}.
    \end{split}
\end{equation*}
The difficulty here is due to the fact that $\chi - \psi\psi'$ does not vanish at infinity,
so we can not immediately conclude that the entire integral is bounded.
However, this can be easily resolved as follows
(note that the trouble occurs only when $p = 0$ or $p' =0$).

We represent the domain of integration $\mathbb{R}^{n}_{p,p'}$ as a union
$$
    \mathbb{R}^{n}_{p,p'} = W \cup U \cup U', \quad
$$
where
$W = \{{\lvert p \rvert} \geq C/2 \text{ and } {\lvert p' \rvert} \geq C/2\}$,
$U = \{{\lvert p' \rvert} \leq C\}$,
and $U' = \{{\lvert p \rvert} \leq C\}$; here $C$ is some positive constant (not necessarily the same as one above).
Let us estimate three integrals corresponding to these new domains.

1) The integral over $W$ is bounded for any $s'$,
\begin{equation*}
        \int_W \left(1 + {\lvert p \rvert}^2 + {\lvert p' \rvert}^2 \right)^{s'-s}
                {\bigl\lvert b^0(p,p') \bigr\rvert}^2
                {\bigl\lvert \chi(p,p')-\psi(p) \psi'(p') \bigr\rvert}^2
                \, d{p'} \, d{p} < \infty \quad \forall s',
\end{equation*}
since $\chi-\psi \psi'$ vanishes at infinity in $W$.

2) Over $U$ we have
\begin{equation*}
    \begin{split}
        \int_{U} \big( 1 + & {\lvert p \rvert}^2 + {\lvert p' \rvert}^2 \big)^{s'-s}
                {\bigl\lvert b^0(p,p') \bigr\rvert}^2
                {\bigl\lvert \chi(p,p')-\psi(p) \psi'(p') \bigr\rvert}^2
                \, d{p'} \, d{p} \leq \\ &
        \leq \int_{U} \big( 1 + {\lvert p \rvert}^2 + {\lvert p' \rvert}^2 \big)^{s'-s}
                {\bigl\lvert b^0(p,p') \bigr\rvert}^2
                \, d{p'} \, d{p} \leq \\ &
        \leq C_1 \int_{U}
                \big(1+ {\lvert p \rvert}^2+{\lvert p' \rvert}^2\big)^{s'-s+m}
                \, d{p'} \, d{p} < \\ &
        < C_2 \int_{\mathbb{R}^{n/2}}
                \big(1+{\lvert p \rvert}^2\big)^{s'-s+m}
                \, d{p}
    \end{split}
\end{equation*}
(here we use the homogeneity of $b^0$ and the obvious fact that $\chi - \psi \psi' \leq 1$).
The latter integral converges for $s'-s+m < -n/4$. Set $s' = s-m-n/2+\varepsilon > 0$
(recall that $s-m-n/2>0$ by assumption).
Then the integral converges for $0 \leq \varepsilon < n/4$.

3) The integral over $U'$ is completely similar to the integral over $U$, and
the same relations provide that it is convergent.

The obtained estimate shows that operator~\eqref{eq:operator{R}} is
bounded whenever $0 \leq \varepsilon < n/4$.
Lemma~\ref{lemma:differenceNormEstimate} is proved.
\end{proof}

Now, we recall that  the relations $s < 0$, $s - m - n/2 > 0$
are satisfied by assumption,
so lemma~\eqref{lemma:differenceNormEstimate} ensures that ${R}$
transferred to physical coordinates
is a smoothing operator in the spaces $H^s(X) \rightarrow H^{s-m+n/2}(X)$.
Therefore, it is compact.

Independence of the choice of cut-off functions $\psi$ and $\psi'$ is obvious from calculations.

Thus, $\widetilde{i^!(\Phi)}$ is indeed of the form~\eqref{eq:dualOurSledAsIntegralOperatorSecond}
up to operators which are compact in physical coordinates.
Proposition~\ref{prop:dualOurSledIsIntegralOperator} is proved.
\end{proof}

\subsection{Reduction to the Fourier--Mellin structure}

Now consider $\widetilde{i^!_0(\Phi)}$, that is, a representation of the
reduced trace $i^!_0(\Phi)$ in dual coordinates.

Let us omit the cut-off functions in~\eqref{eq:dualOurSledAsIntegralOperatorSecond} in this section.
Then, instead of $\widetilde{i^!_0(\Phi)}$, we have an operator acting (formally) as follows
\begin{equation}\label{defeq:dualFormalReducedSledOurFIO}
    \widetilde{u}(p) \longmapsto \int {\lvert p \rvert}^{s-m-n/2} {\lvert p' \rvert}^{-s} b^0(p,p') \widetilde{u}(p') \, d{p'}.
\end{equation}
It is an integral operator on $\mathbb{R}^{n/2}$ induced by a kernel
which is a homogeneous function of degree $-n/2$.
Such an operator turns out to be a Mellin convolution in spherical coordinates.

Indeed, let us make in~\eqref{defeq:dualFormalReducedSledOurFIO} the spherical change of coordinates,
\begin{equation*}
    \begin{split}
        & p = r_p\omega_p, \quad r_p \in \mathbb{R}_+, \, \omega_p \in \mathbb{S}^{n/2-1}, \\
        & p' = r_{p'}\omega_{p'}, \quad r_{p'} \in \mathbb{R}_+, \, \omega_{p'} \in \mathbb{S}^{n/2-1}, \\
        & d{p'} = r_{p'}^{n/2} \frac{dr_{p'}}{r_{p'}}d\omega_{p'}.
    \end{split}
\end{equation*}
Then~\eqref{defeq:dualFormalReducedSledOurFIO} takes the form
$$
    \widetilde{u}(p) \longmapsto
    \int_{\mathbb{S}^{n/2-1}}d{\omega_{p'}}
    \int_{\mathbb{R}_+}
        \biggr(\dfrac{r_p}{r_{p'}}\biggl)^{s-m-n/2}
        b^0\biggl(\dfrac{r_p}{r_{p'}}\omega_p,\omega_{p'}\biggr)
        \widetilde{u}(r_{p'}\omega_{p'}) \,\dfrac{dr_{p'}}{r_{p'}}.
$$
Let us introduce an operator-valued function
$K(t)$, $t > 0$, with values in operators acting on sphere $\mathbb{S}^{n/2-1}$ by the formula:
\begin{equation}\label{defeq:K(t)}
    K(t) v(\omega) = \int_{\mathbb{S}^{n/2-1}} t^{s-m-n/2} b^0(t \omega,\omega') v(\omega')\, d{\omega'}.
\end{equation}
Note that the values of $K(t)$ are compact operators in
$L^2(\mathbb{S}^{n/2-1}) \rightarrow L^2(\mathbb{S}^{n/2-1})$,
since they are integral operators with smooth kernels acting on a compact manifold.

Now, operator~\eqref{defeq:dualFormalReducedSledOurFIO} can be written in the form of Mellin convolution
(see, e.g.,~\cite{ConvBook}) with the operator-valued kernel $K(r_p/r_{p'})$:
\begin{equation*}
    \widetilde{i^!_0(\Phi)} \widetilde{u}(p) =
    \int_{\mathbb{R}_+}
        K\biggl(\dfrac{r_p}{r_{p'}}\biggr) \widetilde{u}(r_{p'}\omega_{p'}) \,\dfrac{dr_{p'}}{r_{p'}}.
\end{equation*}
After applying the Mellin transform, this operator becomes an operator acting by multiplication by a function.
More precisely, consider the Mellin transform of $K(t)$:
\begin{equation}\label{defeq:whK}
    \widehat{K}(\zeta)
    =
    \mathcal{M}_{t \rightarrow \zeta} K(t) = \int_{\mathbb{R}_+} t^\zeta K(t) \,\dfrac{dt}{t},
    \quad \operatorname{Re}(\zeta) = \gamma = n/4
\end{equation}
(here the choice of $\gamma$ is made because of~\eqref{eq:weightGaForHs} (see~\cite{Losch1} for details)).
Then $\widetilde{i^!_0(\Phi)}$ takes the form:
$$
    \widetilde{i^!_0(\Phi)} \widetilde{u}(p) =
    \mathcal{M}^{-1}_{\zeta \rightarrow r_p} \widehat{K}(\zeta) \mathcal{M}_{r_p \rightarrow \zeta} \,
        \widetilde{u}(r_{p}\omega_{p}).
$$
Reverting the cut-off function (let us take $\psi' \equiv \psi$), we eventually have
$$
    \widetilde{i^!_0(\Phi)} \widetilde{u}(p) = \psi(p) \, \mathcal{M}^{-1}_{\zeta \rightarrow r_p} \widehat{K}(\zeta) \mathcal{M}_{r_p \rightarrow \zeta} \, \psi(p) \, \widetilde{u}(p).
$$

The function $\widehat{K}(\zeta)$ takes values in compact operators. Let us ensure
that it satisfies the required analytical conditions.

\begin{lemma}
The following assertions hold
\begin{enumerate}
    \item The function $\widehat{K}(\zeta)$ is analytic in the vertical strip
    \begin{equation}\label{neq:polosa}
        -s+m+\frac{n}{2} < \operatorname{Re}(\zeta) < -s+\frac{n}{2}.
    \end{equation}
    \item ${\bigl\| \widehat{K}(\zeta) \bigr\|} \to 0$ as ${\lvert \zeta \rvert} \to \infty$ in strip~\eqref{neq:polosa}.
\end{enumerate}
\end{lemma}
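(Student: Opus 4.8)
The plan is to read off both assertions from the explicit kernel of $K(t)$: by~\eqref{defeq:K(t)} this kernel is $\kappa_t(\omega,\omega')=t^{s-m-n/2}\,b^0(t\omega,\omega')$, which for each fixed $t>0$ is smooth on the compact product $\mathbb{S}^{n/2-1}\times\mathbb{S}^{n/2-1}$, so $K(t)$ is Hilbert--Schmidt and $\|K(t)\|$ is bounded, up to a fixed constant, by $\sup_{\omega,\omega'}|\kappa_t(\omega,\omega')|$. The first and only substantial step is to prove the two one-sided power bounds
\begin{equation*}
    \|K(t)\|\le C\,t^{s-m-n/2}\quad(0<t\le1),\qquad \|K(t)\|\le C\,t^{s-n/2}\quad(t\ge1).
\end{equation*}
For $t\le1$ the points $(t\omega,\omega')$ range over a compact subset of $\mathbb{R}^n\setminus\{0\}$ (the second block always has norm $1$), on which the function $b^0$, homogeneous of degree $m$ and smooth away from the origin, is bounded; this gives the first bound. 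For $t\ge1$ one writes, using homogeneity, $b^0(t\omega,\omega')=t^m\,b^0(\omega,\omega'/t)$ and repeats the argument with the points $(\omega,\omega'/t)$, which now range over a compact subset of $\mathbb{R}^n\setminus\{0\}$; this gives the second bound.

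With these bounds, assertion~1 is immediate. For $\zeta$ in the strip~\eqref{neq:polosa} the integral $\widehat{K}(\zeta)=\int_0^\infty t^{\zeta-1}K(t)\,dt$ converges absolutely in the Hilbert--Schmidt norm, since $\int_0^1 t^{\operatorname{Re}\zeta-1+s-m-n/2}\,dt$ is finite exactly for $\operatorname{Re}\zeta>-s+m+n/2$ and $\int_1^\infty t^{\operatorname{Re}\zeta-1+s-n/2}\,dt$ is finite exactly for $\operatorname{Re}\zeta<-s+n/2$ --- the two inequalities that cut out the strip. These estimates are locally uniform in $\zeta$, and for each fixed $t$ the map $\zeta\mapsto t^{\zeta-1}K(t)$ is entire with values in the bounded operators on $L^2(\mathbb{S}^{n/2-1})$; hence one may differentiate under the integral sign (the extra factor $\log t$ costs only an arbitrarily small power of $t$) and conclude that $\widehat{K}$ is holomorphic on~\eqref{neq:polosa}.

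For assertion~2 the idea is integration by parts in $t$. Since $K$ is differentiable on $(0,\infty)$, writing $t^{\zeta-1}=\zeta^{-1}\frac{d}{dt}\,t^\zeta$ gives $\widehat{K}(\zeta)=-\zeta^{-1}\int_0^\infty t^{\zeta-1}\bigl(t\,K'(t)\bigr)\,dt$, where the boundary terms $\zeta^{-1}t^\zeta K(t)$ vanish at $0$ and at $\infty$ precisely because $\operatorname{Re}\zeta$ lies strictly inside the strip and $K$ obeys the bounds just established. A short computation --- differentiate $\kappa_t$ and once more invoke the homogeneity of $b^0$ and of its first derivatives --- shows that $t\,K'(t)$ satisfies the same two one-sided bounds as $K(t)$. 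Consequently $\int_0^\infty t^{\zeta-1}\bigl(t\,K'(t)\bigr)\,dt$ is bounded uniformly for $\operatorname{Re}\zeta$ in any closed sub-interval of the open strip, in particular on the weight line $\operatorname{Re}\zeta=n/4$, whence $\|\widehat{K}(\zeta)\|\le C/|\zeta|\to0$ as $|\zeta|\to\infty$. Iterating the integration by parts $N$ times (each step reproduces the same structure and the same boundary-term cancellation) even yields $\|\widehat{K}(\zeta)\|=O(|\zeta|^{-N})$ for every $N$.

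The main obstacle --- and essentially the only nontrivial point --- is the bookkeeping in the first step and in its counterpart for $t\,K'(t)$: one has to check that after the homogeneity rescaling, and after applying $t\,d/dt$, the relevant $t$-dependent arguments of $b^0$ stay in a fixed compact subset of $\mathbb{R}^n\setminus\{0\}$ uniformly in $\omega,\omega'$, so that boundedness of $b^0$ and of its derivatives off the origin translates into the clean power bounds in $t$. Once that is done, assertion~1 follows from absolute convergence and assertion~2 from a single integration by parts.
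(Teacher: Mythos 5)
Your proof is correct, and for assertion 1 it is essentially the paper's argument: the two one-sided power bounds you derive from the homogeneity of $b^0$ are exactly the paper's single estimate $\|K(t)\|\le C\,t^{s-m-n/2}(1+t)^m$, and analyticity then follows in both cases from absolute convergence of the Mellin integral in operator norm on the strip~\eqref{neq:polosa}. For assertion 2, however, you take a genuinely different route. The paper substitutes $t=e^{-\tau}$, observes that $\widehat{K}(\rho+i\varrho)$ is then the Fourier transform in $\varrho$ of the operator-valued function $e^{-\tau\rho}K(e^{-\tau})$, which is in $L^1$ by assertion 1, and concludes decay from the Riemann--Lebesgue lemma; this uses nothing beyond the integrability already established and gives only qualitative decay. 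Your integration by parts instead exploits the smoothness of $b^0$ away from the origin (available from the construction of $b$ in Proposition~\ref{prop:dualOurSledIsIntegralOperator}, since $(t\omega,\omega')$ never meets the origin), checking that the boundary terms vanish strictly inside the strip and that $t\,K'(t)$ obeys the same bounds as $K(t)$ --- which it does, as $\partial_p b^0$ is homogeneous of degree $m-1$ --- and it buys a quantitative rate $\|\widehat{K}(\zeta)\|=O(|\zeta|^{-1})$, indeed $O(|\zeta|^{-N})$ upon iteration, locally uniformly in $\operatorname{Re}\zeta$; this is stronger than what the lemma asks for, at the cost of invoking more regularity than the paper's softer argument needs.
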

\begin{proof}
1) (see~\cite{SaSt31}).
Apparently, the function $\widehat{K}(\zeta)$ is analytic for all $\zeta = \rho + i \varrho$ such that the integral
$$
    \int_0^\infty t^\rho \left\| K(t) \right\| \,\dfrac{dt}{t}
$$
converges.
Since the function $\psi b^0  \psi'$ is homogeneous at infinity of degree $m$, we can estimate this integral as follows
(see~\eqref{defeq:K(t)})
$$
    \int_0^\infty t^\rho \left\| K(t) \right\| \,\dfrac{dt}{t} \leq C \int_0^\infty t^{\rho+s-m-n/2}(1 + t)^m \,\dfrac{dt}{t}.
$$
The latter integral converges at zero for $\rho > -s+m+n/2$ and at infinity for $\rho < -s+n/2$.
This completes the first assertion.

2) It is sufficient to verify that $\|\widehat{K}(\rho + i \varrho)\|$ tends to zero as $\varrho \rightarrow 0$
for $\rho$ belonging to strip~\eqref{neq:polosa}.
Consider the integral on the right hand side of~\eqref{defeq:whK} and make the substitution $t = e^{-\tau}$.
Then this integral takes the form
\begin{equation*}
    \widehat{K}(\zeta)
    =
    \int_{\mathbb{R}} e^{-i\tau\varrho} \left[e^{-\tau\rho} K(e^{-\tau}) \right] \,d\tau.
\end{equation*}
It is the Fourier transform of a function belonging to $L^1$-space
(by assertion 1)).
Therefore, it decays at infinity.
This completes the second assertion, and thus the lemma is proved.
\end{proof}

Recall again that we have $s < 0$, $s-m-n/2 > 0$ by assumption. It follows that
the line $\operatorname{Re}(\zeta) = \gamma = n/4$ lies inside strip~\eqref{neq:polosa}.
Thus, $\widehat{K}(\zeta)$ possess all the properties that a symbol of a Fourier--Mellin operator
is required to have.

Finally, having moved from dual to physical coordinates, we get an operator
of a Fourier--Mellin type. Note that the possibility of the choice $\gamma = n/4$
provides that it is continuous in $L^2(X) \rightarrow L^2(X)$.

The proof of theorem~\ref{th:main} is finished.

\section{Example}
Here we give a simple calculation for the trace of a canonical transformation represented by a rotation.

Let $M$ be a torus $\mathbb{S}^1 \times \mathbb{S}^1$, and $(x,t)$ be its local coordinates.
Let $X$ be a submanifold given by the equation $t = 0$.
Let $g$ act on $T^*(\mathbb{S}^1 \times \mathbb{S}^1)$ by the following formula:
\begin{equation}
    g: \begin{pmatrix}
         x' \\
         t' \\
         p' \\
         \tau' \\
       \end{pmatrix}
    \longrightarrow
        \begin{pmatrix}
         -t \\
         x \\
         -\tau \\
         p \\
       \end{pmatrix}.
\end{equation}
Note that locally $g$ is a counter-clockwise rotation of $\mathbb{R}^2$ by $\pi/2$,
and $(0,0)$ is the fixed point. Let us focus attention on (a neighbourhood of) this point.

We make quantization of $g$ as follows. Graph
$\operatorname{graph} g$ is represented (locally) as a submanifold in $\mathbb{R}^4$
given by the equation
$$
    \operatorname{graph} g = \{ (x,t,p,\tau; \, x',t',p',\tau') \; | \; x' = -t, \, t'=x, \, p' = -\tau, \, \tau' = p \}.
$$
This is a Lagrangian submanifold, and its generating function has the form
$$
    S(x,t,p',\tau') =\tau'x  - p't.
$$
Take the following amplitude
$$
    a(p', \tau') = \dfrac{1}{(p')^2 + (\tau')^2}.
$$
The operator $\Phi = \Phi(g,a)$ has the form
\begin{equation*}
    \Phi u(x,t) =
    \int_{\mathbb{R}^2}
        \mathcal{F}_{(p',\tau') \rightarrow (x',t')}
        \biggl\{
            e^{i (\tau'x  - p't)} \dfrac{1}{(p')^2 + (\tau')^2}
        \biggr\} \,
        u(x',t')
    \, d{x'}\, d{t'}.
\end{equation*}

Consider its trace $i^!(\Phi)$. By~\eqref{eq:kernelDualSledFIO}, in dual coordinates it takes the form
\begin{equation*}
    \widetilde{i^!(\Phi)} \widetilde{u}(p) =
    \int_{\mathbb{R}} \,
    \biggl[
        \int_{\mathbb{R}^2}
            \mathcal{F}_{(x,t) \rightarrow (p,\tau)}
            \biggl\{
                e^{i (\tau'x  - p't)}
                \dfrac{1}{(p')^2 + (\tau')^2}
            \biggr\}
        \, d{\tau}\, d{\tau'}
    \biggr] \, \widetilde{u}(p') \, d{p'}.
\end{equation*}
Integration with respect to $\tau$ can be replaced by the substitution $t = 0$,
and the Fourier transform of an exponent with a linear phase yields a Dirac delta-function.
Hence,
\begin{equation*}
    \begin{split}
        \widetilde{i^!(\Phi)} \widetilde{u}(p) & =
        \int_{\mathbb{R}} \,
        \biggl[
            \int_{\mathbb{R}}
                \mathcal{F}_{x \rightarrow p}
                \biggl\{
                    e^{i \tau'x}
                    \dfrac{1}{(p')^2 + (\tau')^2}
                \biggr\}
            \, d{\tau'}
        \biggr] \, \widetilde{u}(p') \, d{p'} = \\ & =
        \int_{\mathbb{R}}
        \biggl[
            \int_{\mathbb{R}}
                \delta(p - \tau') \,
                \dfrac{1}{(p')^2 + (\tau')^2}
            \, d{\tau'}
        \biggr] \, \widetilde{u}(p') \, d{p'} = \\ & =
        \int_{\mathbb{R}}
            \dfrac{1}{p^2 + (p')^2} \,
            \widetilde{u}(p')
        \, d{p'}.
    \end{split}
\end{equation*}

Consider the reduced trace. It takes the form (in dual coordinates)
\begin{equation}\label{example:eq:almostMellinConv}
    \begin{split}
        \widetilde{i^!_0(\Phi)} \widetilde{u}(p) & =
        \int_{\mathbb{R}}
            {\lvert p \rvert}^{s+1}\dfrac{{\lvert p \rvert}}{p^2 + (p')^2}
            {\lvert p' \rvert}^{-s}\widetilde{u}(p') \,
        d{p'} = \\ & =
        \int_{\mathbb{R}}
            {\left\lvert {\dfrac{p}{p'}} \right\rvert}^s
            \dfrac{{\lvert p \rvert}{\lvert p' \rvert}}{p^2 + (p')^2} \,
            \widetilde{u}(p')
        \, \dfrac{dp'}{p'}.
    \end{split}
\end{equation}
Splitting $\widetilde{u}(p)$ into odd and even parts and using the fact that the integral kernel
of~\eqref{example:eq:almostMellinConv} is an even function, we obtain
that $\widetilde{i^!(\Phi)}$ can be represented in the following form
\begin{equation*}
    \left(
        \begin{array}{c}
            \widetilde u^+(p) \\
            \widetilde u^-(p) \\
        \end{array}
    \right)
    \longmapsto
    \left(
        \begin{array}{cc}
            2\displaystyle\int_{\mathbb{R}_+}
                {\left\lvert {\dfrac{p}{p'}} \right\rvert}^s
                \dfrac{{\lvert p \rvert}{\lvert p' \rvert}}{p^2 + (p')^2}
                \bullet
            \,\dfrac{dp'}{p'} & 0  \vspace{3mm}\\
            0 & 1 \\
        \end{array}
    \right)
    \left(
        \begin{array}{c}
            \widetilde u^+(p) \\
            \widetilde u^-(p) \\
        \end{array}
    \right),
\end{equation*}
where $\widetilde u^-$ and $\widetilde u^+$
stand for odd and even parts of $\widetilde u$, respectively.

This is a matrix operator.
Note that in the upper-left corner we have a Mellin convolution operator with the function
$$
    K(t) = 2t^s\frac{t}{1+t^2},
$$
After applying the Mellin transform, this matrix operator becomes a multiplication operator by a matrix
\begin{equation*}
    \widehat K(\zeta) =
    \begin{pmatrix}
      \widehat K_{11}(\zeta) & 0 \\
      0 & 1 \\
    \end{pmatrix},
\end{equation*}
where $\widehat K_{11}(\zeta)$ is (see~\cite{BeEr2}, p. 270)
\begin{equation*}
    \begin{split}
        \widehat K_{11}(\zeta) & = \mathcal{M}_{t \rightarrow \zeta}\biggl\{2t^s\frac{t}{1+t}\biggr\} =
        \int_{\mathbb{R}_+} 2t^{s+\zeta} \frac{t}{1+t^2} \,\dfrac{dt}{t} = \\ & =
        \frac{\pi}{\cos\left(\frac{\pi}{2}(\zeta+s)\right)}.
    \end{split}
\end{equation*}


\end{document}